\def\R {\mathbb{R}}
\def\N{\mathbb{N}}
\def\eps{\varepsilon}
\def\OP{obstacle problem}
\def\FB{free boundary }
\def\Sph{\mathbb{S}^{d-1}}
\def\HPP{\{x_d=0\}}
\def\SphLap{\Delta_{\Sph}}
\def\SphGrad{\nabla_{\Sph}}
\def\PosS{\{u>0\}}
\def\ConS{\{u=0\}}
\def\PE{\mathcal{P}_{2k}}
\def\PO{\mathcal{P}_{2k+1}}
\def\pbar{\overline{p}}
\def\hu{\hat{u}}
\def\ddd{\frac{\partial}{\partial x_d}}
\def\HausSurf{d\mathcal{H}^{d-1}|_{S^{\pm}_\eta}}
\newtheorem{thm}{Theorem}[section]
\newtheorem{lem}{Lemma}[section]
\theoremstyle{definition}
\newtheorem{defi}{Definition}[section]
\newtheorem{rem}{Remark}[section]
\numberwithin{equation}{section}
\title[Points of integer frequencies]{Contact points with integer frequencies in the thin obstacle problem } 
\author{Ovidiu Savin}
\address{Department of Mathematics,	Columbia University, New York, USA}
\email{savin@math.columbia.edu}
\author{Hui Yu}
\address{Department of Mathematics,	Columbia University, New York, USA}
\email{ huiyu@math.columbia.edu}
\thanks{O.~S.~is supported by  NSF grant DMS-1800645.}
\thanks{H.~Y.~is supported by NSF grant DMS-1954363.}
\begin{document}

\begin{abstract}
For the thin \OP, we develop a unified approach that leads to rates of convergence to blow-up profiles at contact points with integer frequencies. For these points, we also obtain a stratification result.\end{abstract}

\maketitle
\section{Introduction}
The \textit{thin \OP} studies  the following system 
\begin{equation}\label{TOP}
\begin{cases}
\Delta u\le 0 &\text{ in $B_1$,}\\
u\ge 0 &\text{ on $B_1\cap\HPP$,}\\
\Delta u=0 &\text{ in $B_1\cap(\PosS\cup\{x_d\neq 0\})$.}
\end{cases}
\end{equation}  
Here we denote by $B_1$ the unit ball in the  Euclidean space $\R^d$. For a point $x\in\R^d$, we decompose its coordinate as $x=(x',x_d)$ with $x'\in\R^{d-1}$ and $x_d\in\R$. Since the odd part of the solution, $(u(x',x_d)-u(x',-x_d))/2$, is harmonic, it is customary to remove it and assume that \textit{the solution $u$ is even in the $x_d$-direction}. 

After earlier results by Richardson \cite{R} and Uraltseva \cite{U}, Athanasopoulos and Caffarelli showed in \cite{AC} that the solution is locally Lipschitz in $B_1$, and is locally $C^{1,1/2}$ when restricted to either $B_1\cap\{x_d\ge 0\}$ or $B_1\cap\{x_d\le 0\}$. This  optimal regularity of the solution opened the door to  the study of the \textit{contact set} 
$$\Lambda(u):=\ConS\cap\HPP$$ and the \textit{free boundary} $$\Gamma(u):=\partial\PosS\cap\HPP.$$

In Athanasopoulos-Caffarelli-Salsa \cite{ACS}, the authors made a breakthrough by applying Almgren's monotonicity formula  to show that for each point $q\in\Lambda(u)$, there is a constant $\lambda_q$ such that  
$$\|u\|_{\mathcal{L}^2(\partial B_r(q))}\sim r^{\frac{d-1}{2}+\lambda_q}$$
 as $r\to0.$ This constant $\lambda_q$ is called the \textit{frequency of the solution at $q$}. They also showed that the normalized rescalings converge to a \textit{blow-up profile}, that is, 
 \begin{equation}\label{FirstConvergence}u_{q,r}(\cdot):=r^{\frac{d-1}{2}}\frac{u(r\cdot+q)}{\|u\|_{\mathcal{L}^2(\partial B_r(q))}}\to u_0\end{equation} 
 along a subsequence of $r\to 0.$  The limit $u_0$ is a $\lambda_q$-homogeneous solution to \eqref{TOP} in $\R^d.$

It is interesting to  study admissible values of frequencies, to classify homogeneous solutions, and to establish regularity of the contact set/ the free boundary. So far this program is  completed only when $d=2.$  See, for instance, Petrosyan-Shahgholian-Uraltseva \cite{PSU}.

Let $\mathcal{A}$ denote the set of admissible frequencies, that is, $$\mathcal{A}=\{\lambda\in\R: \text{ there is a non-trivial $\lambda$-homogeneous solution to $\eqref{TOP}$}\}.$$ For a solution $u$ to \eqref{TOP} and $\lambda\in\mathcal{A}$,  let $\Lambda_\lambda(u)$ denote the set of contact points with frequency $\lambda$, that is,
$$\Lambda_\lambda(u)=\{q\in\Lambda(u):\lambda_q=\lambda\}.$$

In general dimensions, Athanasopoulos-Caffarelli-Salsa \cite{ACS} showed that $\mathcal{A}\subset \{1,\frac 32\}\cup [2,+\infty).$ Explicit examples give that $\mathbb{N}\cup \{2k-\frac 12:k\in\N\}\subset\mathcal{A}.$ See, for instance, \cite{PSU}. Around each $m\in\N$, there is a frequency gap \cite{CSV, SY2}, in the sense that we can find $\alpha_m>0$, depending on $m$ and $d$, such that 
$$\mathcal{A}\cap(m-\alpha_m,m+\alpha_m)=\{m\} \text{ for each $m\in\N$.}$$

As for the classification of homogeneous solutions and for the regularity of the contact set/ the free boundary, most results center on  points with frequencies in $\{\frac 32\}\cup\N.$ 

Already in \cite{ACS}, it was known that the only $\frac 32$-homogeneous solutions are rotations and multiples of 
$$u_{\frac 32}(x',x_d)=Re(x_{d-1}+i|x_d|)^{3/2}.$$ 
For a solution to \eqref{TOP}, the set $\Lambda_{\frac 32}(u)$  is relatively open in $\Gamma(u)$. The free boundary is an analytic manifold of dimension $(d-2)$ near $\Lambda_{\frac 32}(u)$ \cite{DS,KPS}.

For an even integer $2k$, Garofalo-Petrosyan \cite{GP} classified all $2k$-homogeneous solutions to \eqref{TOP} as 
\begin{align}\label{EvenHomSolution}
\PE^+=\{p: \text{ }&\Delta p=0 \text{ and } x\cdot\nabla p=2kp \text{ in $\R^d$,}   \\& p(\cdot,0)\ge 0 \text{ and } p(\cdot,x_d)=p(\cdot,-x_d)\}.\nonumber
\end{align} 
Let $u$ be a solution to \eqref{TOP} and $q\in\Lambda(u).$ Garofalo-Petrosyan gave a geometric characterization of contact points with even frequencies  
$$q\in\Lambda_{2k}(u) \text{ for some $k\in\N$} \iff \mathcal{H}^{d-1}(\Lambda(u)\cap B_r(q))=o(r^{d-1}) \text{ as $r\to 0.$}$$ 
Here $\mathcal{H}^{d-1}$ is the $(d-1)$-dimensional Hausdorff measure.  In particular, we have $\Lambda_{2k}(u)\subset\Gamma(u).$ They also showed that $\Lambda_{2k}(u)$ is locally covered by $C^{1}$-manifolds. By quantifying the rate of convergence in \eqref{FirstConvergence} at points in $\Lambda_{2k}(u)$, the regularity of the covering manifolds was improved to $C^{1,\log^c}$ in Colombo-Spolaor-Velichkov \cite{CSV}.

More recently, there has been some interest in the study of contact points with odd frequencies, mainly motivated by the connection to the singular set in the obstacle problem, see \cite{FRS}. While points in $\Lambda_1(u)$ lie in the interior of $\Lambda(u),$ it remains an open question whether $\Lambda_{2k+1}(u)$ can contain points in the free boundary for $k\ge 1$. On the other hand, it is known that around $\Lambda_{2k+1}(u)$, the contact set has full density, that is,
  $$q\in\Lambda_{2k+1}(u) \text{ for some $k\in\N$} \iff \mathcal{H}^{d-1}(\{u>0,x_d=0\}\cap B_r(q))=o(r^{d-1})\text{ as $r\to 0.$}$$
  See, for instance, Proposition 7.1 in  Fern\'andez-Real \cite{Fe}.
  
 The family of $(2k+1)$-homogeneous solutions to \eqref{TOP} was recently classified by Figalli, Ros-Oton and Serra in \cite{FRS} as
\begin{align}\label{OddHomSolution}
\PO^+=\{p:    \text{ }&\Delta p=0 \text{ in $\{x_d\neq0\}$ and } \Delta p\le 0 \text{ in $\R^d$,} \\ &x\cdot\nabla p=(2k+1)p \text{ in $\R^d$,}\nonumber \\&p(\cdot,0)= 0 \text{ and } p(\cdot,x_d)=p(\cdot,-x_d)\}.\nonumber
\end{align} They also proved  uniqueness of the blow-up profile $u_0$ in \eqref{FirstConvergence} at $q\in\Lambda_{2k+1}(u)$.

Along a different direction, Focardi-Spadaro  proved that the free boundary $\Gamma(u)$ is countably $(d-2)$-rectifiable in \cite{FoS1, FoS2}. They also showed  that outside a set of dimension at most $(d-3)$, all free boundary points have frequencies in $\{2k,2k+1,2k-\frac 12: k\in\N\backslash\{0\}\}$. For generic boundary data, Fern\'andez-Real and Ros-Oton showed that the free boundary is smooth outside a set of dimension at most $(d-3)$ in \cite{FeR}.

In this paper, we focus on contact points with integer frequencies, that is, points in $\cup_{k\in\N}\Lambda_k(u).$ Around these points, we develop a unified approach that gives a uniform rate for the convergence  in \eqref{FirstConvergence}.

Our main result is:
\begin{thm}\label{MainResult}
Suppose that $u$ is a solution to the thin obstacle problem \eqref{TOP}, and that $0\in\Lambda_m(u)$ for some $m\in\N$.

If $m=2k+1$ is odd, then there is a constant $\alpha\in(0,1)$, depending only on $k$ and the dimension $d$, such that 
\begin{equation}\label{MainResultOdd}
u(x)=p(x)+O(|x|^{2k+1+\alpha}) \text{ as $x\to0$}
\end{equation}  for some $p\in\PO^+.$

If $m=2k$ is even, then there is a constant $c>0$, depending only on $k$ and $d$, such that 
\begin{equation}\label{MainResultEven}
u(x)=p(x)+O(|x|^{2k}(-\log|x|)^{-c}) \text{ as $x\to0$}
\end{equation}  for some $p\in\PE^+.$
\end{thm}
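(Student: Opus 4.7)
\medskip

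\noindent\textbf{Proof proposal.} My plan is to prove, for each small scale $r>0$, a quantitative flatness estimate of the form
\begin{equation*}
D(r) := \inf_{p}\, \frac{1}{r^{m+(d-1)/2}}\,\|u-p\|_{\mathcal{L}^2(\partial B_r)} \;\le\; \omega(r),
\end{equation*}
where the infimum is taken over the appropriate cone of blow-up profiles ($\PO^+$ when $m=2k+1$ and $\PE^+$ when $m=2k$), and the modulus is $\omega(r)=r^\alpha$ in the odd case and $\omega(r)=(-\log r)^{-c}$ in the even case. Once $D(r)$ is controlled, the corresponding best approximations $p_r$ form a Cauchy family in the finite-dimensional cone, converge at rate $\omega(r)$ to the unique blow-up profile $p$, and a dyadic summation over $r=2^{-j}$ yields \eqref{MainResultOdd} or \eqref{MainResultEven}.

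The estimate for $D(r)$ is obtained by iteration over dyadic scales. The first regime, where $D(r)\ge\eta$ for a small universal $\eta>0$, is handled by a compactness-contradiction argument based on Almgren's frequency formula: if $D(r)$ failed to decay by a definite amount along some subsequence $r_j\to 0$, then after rescaling and passing to a limit one would obtain a nontrivial $\lambda$-homogeneous solution with $\lambda\in(m,\,m+\alpha_m)$, contradicting the frequency gap of \cite{CSV,SY2}. This regime is robust and yields polynomial improvement for both parities of $m$.

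The delicate regime is $D(r)\le\eta$, in which linearization around the best approximation $p_r$ becomes available. Writing $u=p_r+w$ and rescaling the error, one must understand the spectrum of the linearized Signorini problem at $p_r$. For odd $m=2k+1$, elements of $\PO^+$ vanish identically on $\HPP$, so the linearization is an eigenvalue problem on the sphere whose spectrum has a gap above $m$; standard Fourier expansion gives the polynomial rate $r^\alpha$. For even $m=2k$, the zero set of $p_r$ on the hyperplane is a lower-dimensional algebraic variety along which the obstacle constraint interacts nontrivially with the perturbation, and only a log-epiperimetric inequality in the spirit of Colombo--Spolaor--Velichkov \cite{CSV} can be expected; this yields a $(-\log r)^{-c}$ improvement per scale, which iterates to \eqref{MainResultEven}.

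The main obstacle is precisely the even case. Because $\PE^+$ is a continuous family parametrized by the polynomial itself, the best approximations $p_r$ may drift as $r$ shrinks, and controlling this drift requires a Weiss-type monotonicity formula tuned to $\PE^+$ together with a log-epiperimetric inequality that is \emph{uniform} in the choice of $p_r$. Proving this uniform inequality, while keeping careful track of the geometry of $\{p_r(\cdot,0)=0\}$ and its interaction with $\Lambda(u)$, is the technical heart of the argument; the odd case then follows from the same scheme but with the uniform estimate upgraded to a genuine spectral gap.
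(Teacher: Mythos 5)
Your plan has the right overall shape (iterate a scale-by-scale improvement, treat the even case with only logarithmic gain), but it leaves the central difficulty untouched and the odd case rests on a claim that is false as stated. The linearization in your ``$D(r)\le\eta$'' regime produces a correction $h_m$ that is merely an $m$-homogeneous harmonic polynomial even in $x_d$ (vanishing on $\HPP$ when $m$ is odd); there is no reason the updated approximation $p_r+h_m$ stays in the admissible cone $\PO^+$ or $\PE^+$ over which you take your infimum, since $h_m$ need not respect $\Delta p\le 0$ (odd case) or $p(\cdot,0)\ge 0$ (even case). This is exactly the obstruction the paper names, and its resolution there is a concrete device you do not have: for every unconstrained $p\in\PO$ or $\PE$ one solves a boundary-layer thin obstacle problem on $\Sph$ to get a replacement $\pbar$ satisfying the constraints up to an error $\kappa_p$ supported near $\HPP$, and one measures closeness of $u$ to $\pbar$ (Definition \ref{DefWellApprox}) while tracking $\kappa_p$ alongside the Weiss energy. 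Without something playing this role, your iteration cannot close: the best approximation in the constrained cone does not absorb the neutral modes generated by the linearized problem.

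Two further points are genuine gaps rather than omissions of detail. First, for odd $m=2k+1$ there is no ``spectral gap above $m$'': $\lambda(m)$ is itself an eigenvalue of the relevant spherical problem, with eigenspace exactly $\PO$, so Fourier expansion leaves neutral directions that force the drift of $p_r$ you hoped to avoid, and the sign condition $\ddd p\le 0$ on $\HPP$ distinguishing $\PO^+$ from $\PO$ must still be recovered; the paper handles this through the dichotomy with the Weiss energy, the key inequality $W_{2k+1}(u;3/4)\le C\eps^2$ (which uses $W(\pbar;1)\le 0$, a property of the replacement, not of $p$), and the fact that near odd-frequency points the contact set has full density so that the linearized limit $h$ vanishes on $\HPP$ (Lemmas \ref{PinDown} and \ref{EventualZero}) — none of which follows from a spectral argument alone. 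Second, your first regime claims that the frequency gap of \cite{CSV,SY2} yields a quantified (even polynomial) decay of $D(r)$ by compactness; compactness only gives $D(r)\to 0$ qualitatively, and no rate can be extracted without a mechanism such as an epiperimetric inequality or the paper's dichotomy between Weiss-energy decay and improvement of approximation. Finally, for even $m$ you defer entirely to a uniform log-epiperimetric inequality ``in the spirit of \cite{CSV}'' without proving it; that inequality is precisely the known route to \eqref{MainResultEven}, so as written your proposal proves neither the new odd-frequency estimate \eqref{MainResultOdd} nor an independent even-frequency one.
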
 

\begin{rem} While it is known in \cite{FRS} that  rescaled solutions converge to some $p\in\PO^+$ at points with odd frequencies, this is the first time a quantified rate of convergence has been obtained.  Corresponding results at even frequency points were known in Colombo-Spolaor-Velichkov \cite{CSV}. Our method is different and applies to all points with integer frequencies. It also leads to an improved exponent $c$ in \eqref{MainResultEven}, and in the corresponding $\log$-epiperimatric inequality from \cite{CSV} at points with even frequencies, see Remark \ref{ImprovedEpi}.
\end{rem}

With a standard application of Whitney's extension theorem and the implicit function theorem, Theorem \ref{MainResult} leads to the following stratification result of $\Lambda_m(u)$:
\begin{thm}\label{MainStratification}
Suppose that $u$ is a solution to the thin obstacle problem \eqref{TOP}.

For each $m\in\N$, we have the following decomposition $$\Lambda_m(u)=\cup_{j=0,1,\dots, d-2}\Lambda_m^j(u).$$ 

The lowest stratum $\Lambda_m^0(u)$ is locally isolated. 

If $m$ is odd, then $\Lambda_m^j(u)$ is locally covered by a $j$-dimensional $C^{1,\alpha}$ manifold for each $j=1,\dots,d-2.$

If $m$ is even, then $\Lambda_m^j(u)$ is locally covered by a $j$-dimensional $C^{1,\log}$ manifold for each $j=1,\dots,d-2.$
\end{thm}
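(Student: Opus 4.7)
I would promote the pointwise rates of Theorem \ref{MainResult} to a single globally regular function via Whitney's extension theorem, and then read off the stratification through the implicit function theorem. For each $q\in\Lambda_m(u)$, Theorem \ref{MainResult} provides a unique blow-up $p_q\in\PE^+\cup\PO^+$; since $\Lambda_m(u)\subset\HPP$ and $p_q$ is even in $x_d$, in both cases $p_q$ restricted to $\{x_d\ge 0\}$ is a polynomial of degree $m$ (of the form $x_d\,r_q$ with $\deg r_q=m-1$ in the odd case). The natural stratification is
\[
V_q:=\{v\in\HPP:p_q(\cdot+v)\equiv p_q\},\qquad \Lambda_m^j(u):=\{q\in\Lambda_m(u):\dim V_q=j\},
\]
and a short symmetry argument using $p_q(x',-x_d)=p_q(x',x_d)$, together with the classifications \eqref{EvenHomSolution}-\eqref{OddHomSolution}, forces $\dim V_q\le d-2$.

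Next I would verify Whitney's hypotheses for the polynomial field $P_q(x):=p_q(x-q)$ on $\{x_d\ge 0\}$. Theorem \ref{MainResult} gives $|u(x)-P_q(x)|\le C\,\sigma(|x-q|)$ with $\sigma(t)=t^{m+\alpha}$ in the odd case and $\sigma(t)=t^m(-\log t)^{-c}$ in the even case. For $q_1,q_2\in\Lambda_m(u)$, evaluating at $x=q_2$ and using $u(q_2)=0$ gives $|P_{q_1}(q_2)|\le C\sigma(|q_1-q_2|)$; a rescaling in an annulus of width comparable to $|q_1-q_2|$, together with interior harmonic estimates for $u$ in $\{x_d>0\}$, upgrades this to the full Whitney compatibility
\[
\bigl|\partial^\beta\bigl(P_{q_1}-P_{q_2}\bigr)(q_j)\bigr|\le C\,\sigma(|q_1-q_2|)\,|q_1-q_2|^{-|\beta|},\quad 0\le|\beta|\le m,\ j=1,2.
\]
Whitney's extension theorem --- in its H\"older version for $m$ odd, and in its modulus-of-continuity version for $m$ even (with Dini-admissible modulus $(-\log t)^{-c}$) --- then produces $F\in C^{m,\alpha}(\{x_d\ge 0\})$ (resp.\ $C^{m,\log^c}(\{x_d\ge 0\})$) with order-$m$ Taylor polynomial $P_q$ at each $q\in\Lambda_m(u)$.

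Finally I would apply the implicit function theorem within $\HPP$. Since each $p_q$ is $m$-homogeneous, $\partial^\gamma F(q)=0$ on $\Lambda_m(u)$ for every $|\gamma|<m$. Fix $q_0\in\Lambda_m^j(u)$: a routine calculation identifies $V_{q_0}$ as the kernel of a linear map whose matrix --- formed from appropriate $m$-th order tangential partials of $F$ at $q_0$ --- has rank $d-1-j$. Selecting $(d-1-j)$ multi-indices $\gamma_1,\dots,\gamma_{d-1-j}$ with the corresponding rows linearly independent and setting
\[
G(x'):=\bigl(\partial^{\gamma_i}F(x',0)\bigr)_{i=1}^{d-1-j},
\]
we obtain $G\in C^{1,\alpha}(\R^{d-1})$ (resp.\ $C^{1,\log^c}(\R^{d-1})$) vanishing on $\Lambda_m(u)$ near $q_0$ and with $D_{x'}G(q_0)$ of full rank $d-1-j$. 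The implicit function theorem then presents $G^{-1}(0)$ near $q_0$ as a $j$-dimensional manifold of the required regularity that covers $\Lambda_m^j(u)$ locally; for $j=0$ it yields the local isolation of $q_0$. The delicate point --- and main obstacle --- is the even case, where both Whitney's extension and the implicit function theorem must be carried out with the non-H\"older modulus $\sigma(t)=t^m(-\log t)^{-c}$; the required modulus-of-continuity versions are classical but must be tracked faithfully to preserve the exponent $c$ in the final $C^{1,\log}$ regularity.
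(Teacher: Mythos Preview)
Your proposal is correct and follows essentially the same route the paper indicates: the paper does not give a detailed proof of Theorem~\ref{MainStratification} but simply states that it ``follows with Whitney's extension theorem and the implicit function theorem'' following the strategy of Garofalo--Petrosyan \cite{GP} and Colombo--Spolaor--Velichkov \cite{CSV}, which is exactly what you outline. Your write-up is in fact more detailed than the paper's; the only minor imprecision is that the multi-indices $\gamma_i$ defining $G$ should have $|\gamma_i|=m-1$ (so that $G\in C^{1,\alpha}$ or $C^{1,\log}$ and $D_{x'}G(q_0)$ is built from the $m$-th order derivatives), and the Whitney compatibility step does not really require harmonic estimates for $u$---it follows directly from the two pointwise bounds $|u-P_{q_i}|\le C\sigma$ on a common ball and the equivalence of norms for degree-$m$ polynomials.
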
 

\begin{rem}
Points in $\Lambda_{1}(u)$ and $\Lambda_3^0(u)$ lie in the interior of the contact set $\Lambda(u)$. It remains open whether other strata of $\Lambda_{2k+1}(u)$ can contain points on the free boundary,  see Remark \ref{Interior}.
\end{rem}

To obtain the results at even-frequency points, the approach taken by Colombo-Spolaor-Velichkov  \cite{CSV} is based on the decomposition of the energy in terms of Fourier modes. This leads to a $\log$-epiperimetric inequality for the $2k$-Weiss energy functional (see \eqref{Weiss}). On the other hand, our method is based on the classic technique of linearization as in De Silva \cite{D}.  By  working directly in the physical space instead of the Fourier space, it seems that we are able to get more detailed information. 

The main challenge is that solutions to the linearized problem do not have to satisfy the constraints in \eqref{TOP}  (they might fail $\Delta u\le 0$ or 
$u(x',0)\ge 0$). In our approach, this issue is fixed by solving a `boundary layer problem' near the hyperplane $\HPP.$  For each unconstrained $m$-homogenous harmonic polynomial $p$, we associate its approximation $\bar p$ that satisfies the constraints on $\{x_d=0\}$ and is harmonic up to an error $\kappa_p$ away from this hyperplane. We use the class of functions $\bar p$ to approximate the solution $u$ inductively in dyadic balls $B_r$, while keeping track of the rescaled error $\eps \ge \kappa_p$. We introduce the notation $u \in \mathcal{S}_{m}(p,\eps,r)$ when a solution $u$ is $\eps$-approximated by $\bar p$ at scale $r$, see Definition \ref{DefWellApprox}.

With this notation, the main lemma is the following: 
\begin{lem}\label{Dichotomy}
Given  $m\in\N$, there are constants, $\tilde{\eps}$, $r_0$, $c$ small, $C$ big, such that 

If $u\in\mathcal{S}_{m}(p,\eps,1)$ with $\eps<\tilde{\eps}$ and $1\le \|p\|_{\mathcal{L}^2(\Sph)}\le 2$, then we have the following dichotomy: 

a) Either $$W_{m}(u;1)-W_{m}(u; r_0)\ge c\eps^2,$$ and $$u\in\mathcal{S}_m(p,C\eps,r_0);$$

b) or $$u\in\mathcal{S}_{m}(p',\frac12\eps,r_0)$$ for some $p'$ with $$\|p'-p\|_{\mathcal{L}^2(\Sph)}\le C\eps.$$
\end{lem}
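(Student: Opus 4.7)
The plan is to prove the lemma by a compactness/linearization argument in the spirit of De Silva, combined with a Fourier/spherical-harmonics decomposition at the limit.

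Suppose for contradiction that the lemma fails for some $m$. Then there exists a sequence $u_n\in\mathcal{S}_m(p_n,\eps_n,1)$ with $\eps_n\to 0$ and $1\le\|p_n\|_{\mathcal{L}^2(\Sph)}\le 2$, for which \emph{both} alternatives fail. After passing to a subsequence we may assume $p_n\to p_\infty$. The normalized defects
\begin{equation*}
v_n:=\frac{u_n-\overline{p_n}}{\eps_n}
\end{equation*}
are uniformly bounded in $\mathcal{L}^2(B_1)$ by the definition of $\mathcal{S}_m(p_n,\eps_n,1)$. Because $u_n$ solves \eqref{TOP} and $\overline{p_n}$ was built to realize the constraints on $\HPP$ up to a harmonic error $\kappa_{p_n}\le\eps_n$, the $v_n$ satisfy an equation with right-hand side $\kappa_{p_n}/\eps_n$, bounded by $1$, in $\{x_d\neq 0\}$; interior estimates and the standard $C^{1,1/2}_{x_d\ge 0}$ bounds give enough compactness to extract a limit $v_n\to v_\infty$ in $\mathcal{L}^2_{\text{loc}}$.

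The next step is to identify the linearized problem satisfied by $v_\infty$. Away from $\HPP$, $v_\infty$ is harmonic (since the Laplacian error rescales correctly, and in fact one expects $\kappa_{p_n}/\eps_n\to 0$ up to redefining $p_n$). On $\HPP$, the linearization of the Signorini conditions around $p_\infty$ gives a Signorini-type problem on the coincidence set $\{p_\infty=0\}\cap\HPP$ (in the odd case this is all of $\HPP$ and $v_\infty$ is simply a harmonic function, even in $x_d$, vanishing on $\HPP$; in the even case the problem is a Signorini condition split along the nodal set of $p_\infty|_{\HPP}$). I would then expand $v_\infty$ into its spherical-harmonic Fourier series, writing
\begin{equation*}
v_\infty=\sum_{\lambda\in\mathcal{A}} h_\lambda,
\end{equation*}
where $h_\lambda$ is a $\lambda$-homogeneous solution to the relevant linearized problem. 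Because of the frequency gap around $m$ recalled in the introduction, the modes with $\lambda\neq m$ split cleanly into $\lambda<m$ (excluded by the definition of $\mathcal{S}_m$ and sharp lower-bound estimates on the $\mathcal{L}^2$ norm of $u_n$) and $\lambda>m+\alpha_m$.

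From this decomposition I would read off the dichotomy. The mode $h_m$ corresponds to an infinitesimal motion of the polynomial $p_n$ inside the $m$-homogeneous family, so the perturbation $p_n':=p_n+\eps_n h_m$ (projected onto $\PE^+$ or $\PO^+$) would, together with a Caccioppoli-type improvement using the higher modes, yield the improved approximation $u\in\mathcal{S}_m(p',\tfrac12\eps,r_0)$ of alternative (b), provided $\|h_m\|_{\mathcal{L}^2(\Sph)}$ is sizeable. If on the other hand $h_m$ is negligible, then $v_\infty$ consists of modes $\lambda\ge m+\alpha_m$, and the scaling of the Weiss energy under the decomposition gives
\begin{equation*}
W_m(u_n;1)-W_m(u_n;r_0)\ge c\,\eps_n^2\sum_{\lambda>m}(1-r_0^{2(\lambda-m)})\|h_\lambda\|_{\mathcal{L}^2(\Sph)}^2\ge c\,\eps_n^2,
\end{equation*}
contradicting the failure of alternative (a). Choosing $r_0$ small fixes the constants, and $\tilde\eps$ small ensures the nonlinear remainders in the comparison between $u_n-\overline{p_n'}$ and $\eps_n(v_\infty-h_m)$ are absorbed.

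The main obstacle I expect is technical rather than conceptual: namely, controlling the boundary-layer corrector $\overline{p_n'}-\overline{p_n}$ precisely enough so that the post-update $\mathcal{L}^2$ error at scale $r_0$ is genuinely $\tfrac12\eps_n r_0^m$ (not merely $O(\eps_n)r_0^m$), and simultaneously verifying $\kappa_{p_n'}\le\tfrac12\eps_n$. This amounts to a careful quantitative analysis of the map $p\mapsto\overline p$ and of its linearization at $p_\infty$, and is what makes working in physical space rather than Fourier space payoff: the corrector $\bar p$ decays fast away from $\HPP$ so its contribution to the updated error is controlled by pure harmonic-polynomial estimates once $r_0$ is fixed. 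The even/odd distinction reappears here because in the even case the linearized Signorini problem enjoys no spectral gap beyond $\alpha_m$, whereas in the odd case the linearization is an unconstrained harmonic problem, which is why one obtains a polynomial rate $\eps^2$ improvement in both settings but only $\alpha_m$-dependent geometric improvement for the polynomial update.
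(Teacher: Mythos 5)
Your overall strategy (compactness, normalize the defect by $\eps_n$, identify a harmonic limit, and perturb $p_n$ by the degree-$m$ part of the limit) is indeed the skeleton of the paper's argument, but as written the proposal has two genuine gaps. First, you never use the negation of alternative (a) in the analysis of the limit, and you need it. Your claim that the modes of $v_\infty$ with homogeneity $\lambda<m$ are ``excluded by the definition of $\mathcal{S}_m$'' is not correct: $u_n\in\mathcal{S}_m(p_n,\eps_n,1)$ only bounds $\|u_n-\overline{p_n}\|_{H^1(B_1)}$ and $\kappa_{p_n}$ by $\eps_n$, and the defect may perfectly well contain order-$\eps_n$ components of lower homogeneity; these amplify like $r^{\ell-m}$ under the rescaling to $B_{r_0}$ and would destroy membership in $\mathcal{S}_m(\cdot,\cdot,r_0)$ in \emph{both} alternatives, so your two-way reading (degree-$m$ mode sizable $\Rightarrow$ (b); only higher modes $\Rightarrow$ Weiss drop $\Rightarrow$ (a)) does not cover all cases. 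In the paper the lower-order modes are killed precisely by assuming, for contradiction, that the Weiss drop is $o(\eps_n^2)$: via \eqref{ChangeInRadial} this gives almost $m$-homogeneity of the normalized defect between scales $1$ and $1/2$, which is what forces $\|h_\ell\|$, $\ell<m$, to be negligible (Lemma \ref{InitialImprovement}). Relatedly, the homogeneous solutions of the linearized problem are just (reflected) harmonic polynomials with integer degrees, so the thin-obstacle frequency gap $\alpha_m$ you invoke plays no role; and in the even case the limit is harmonic in all of $B_1$ (the nodal set of $p_\infty|_{\HPP}$ has zero capacity and $p_\infty\ge0$ by Lemma \ref{BoundsForKappa}), not a Signorini problem along that nodal set.

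Second, the point you defer as ``technical''---verifying $\kappa_{p_n'}\le\frac12\eps_n$---is actually the crux, because $\kappa_{p'}$ enters $\delta$ and is scale-invariant, so it cannot be improved by passing to scale $r_0$; and your expectation that ``$\kappa_{p_n}/\eps_n\to0$ up to redefining $p_n$'' is unjustified (nothing in the hypotheses prevents $\kappa_{p_n}\sim\eps_n$, in which case $v_\infty$ could even vanish identically and neither branch of your scheme yields a contradiction). The paper resolves this with a specific mechanism you would need to reproduce: the corrector $\Phi_{p_n}$ from \eqref{DefPhip} carries a $|x|^m\log|x|$ term whose coefficient $\varphi_{p_n}$ is nondegenerate (Lemma \ref{NontrivialProjection}), and the almost $m$-homogeneity coming from the small Weiss drop is incompatible with a logarithm unless $\kappa_{p_n}/\eps_n\le Cr+o(1)$; one also needs the stability of the boundary-layer replacement, $\overline{p_n+\eps_n h_m}=\overline{p_n}+\eps_n h_m+\eps_n o(1)$ (Lemma \ref{ReplacementOdd}), which you flag but do not prove. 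Finally, in your ``only higher modes'' branch, transferring the lower bound on the Weiss drop from the limit back to $u_n$ requires an argument (strong $H^1$ convergence on compact sets or an epiperimetric-type computation); in the paper this transfer is never needed because the contradiction hypothesis already fixes the Weiss drop to be $o(\eps_n^2)$ and the conclusion is always alternative (b).
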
 

Here the integer $m$ denotes the frequency of the contact point, and $W_m$ is the Weiss energy functional, see \eqref{Weiss}. 

Lemma \ref{Dichotomy} states that when moving to a smaller scale, we can improve, by a definite amount, either the Weiss energy or  the error in approximation. On the other hand, the Weiss energy is controlled by the error $\eps$ in the approximation, see Lemma \ref{WeissComparison2}. It follows that both quantities decay in a quantified fashion. 
Similar ideas have been applied to the nonlinear obstacle problem in \cite{SY1} as well as the triple-membrane problem in \cite{SY3}.

This paper is organized as follows: In Section 2, we collect some preliminary results and introduce the boundary layer problem. In Sections 3 and 4, we prove Lemma \ref{Dichotomy} at points with the odd and even frequencies. These two sections contain the heart of this work. In Section 5, we conclude with the proof of our main results.

\section{Preliminaries}
In this section, we collect some useful results and introduce the boundary layer problem. 

Throughout this paper, we denote by $u$ a solution to the thin obstacle problem  \eqref{TOP} on some domain inside $\R^d$ with $d\ge 3.$ This space is decomposed as $$\R^d=\{(x',x_d):x'\in\R^{d-1}, x_d\in\R\}.$$ For a set $E\subset\R^d$, we define the following subsets  relative  to $\HPP$:
$$E'=E\cap\HPP, \text{ } E^+=E\cap\{x_d>0\} \text{ and } E^-=E\cap\{x_d< 0\}.$$In particular, the contact set is $$\Lambda(u)=\{u=0\}'.$$

By applying Almgren's monotonicity formula to the thin \OP, Athanasopoulos, Caffarelli and Salsa showed in \cite{ACS} that the contact set $\Lambda(u)$ can be decomposed according to the frequencies of the contact points. In this work, we focus on points with integer frequencies. Thanks to \cite{GP} and \cite{FRS}, these points can be characterized as 
\begin{equation}\label{EvenContactDef}
\Lambda_{2k}(u)=\{q\in\Lambda(u): u_{q,r}\to u_0\in\PE^+ \text{ as $r\to 0$}\}
\end{equation} and 
\begin{equation}\label{OddContactDef}
\Lambda_{2k+1}(u)=\{q\in\Lambda(u): u_{q,r}\to u_0\in\PO^+ \text{ as $r\to0$}\}.
\end{equation} 
Recall the definition of normalized rescalings $u_{q,r}$ from \eqref{FirstConvergence}. The spaces of homogeneous solutions, $\PE^+$ and $\PO^+$, were introduced in \eqref{EvenHomSolution} and \eqref{OddHomSolution}.

When focusing on a particular frequency, constants depending only on that frequency and the dimension $d$ are called \textit{universal constants}.

\subsection{Weiss monotonicity formula and consequences}
First used by Weiss for the obstacle problem in \cite{W}, the Weiss monotonicity formula has been indispensable  in the study of free boundary problems. Garofalo-Petrosyan \cite{GP} introduced its analogue to the thin obstacle problem.

For each $\lambda\in\R$, the \textit{$\lambda$-Weiss energy functional} is 
\begin{equation}\label{Weiss}
W_\lambda(u;r)=\frac{1}{r^{d-2+2\lambda}}\int_{B_r}|\nabla u|^2-\frac{\lambda}{r^{d-1+2\lambda}}\int_{\partial B_r}u^2.
\end{equation} 
We collect some of its properties in the following lemma. For its proof, see Theorem 1.4.1 and Theorem 1.5.4 in \cite{GP}.

\begin{lem}\label{PropertyWeiss}
Suppose that $u$ solves the thin obstacle problem in $B_1$. Then for $r\in(0,1)$, we have
\begin{equation}\label{DerOfWeiss}
\frac{d}{dr}W_\lambda(u;r)=\frac{2}{r}\int_{\partial B_1}(\nabla u_r\cdot\nu-\lambda u_r)^2,
\end{equation} where $u_r(x)=u(rx)/r^\lambda.$ In particular, $r\mapsto W_\lambda(u;r)$ is non-decreasing. 

If we further assume that $0\in\Lambda_\lambda(u)$, then $\lim_{r\to 0}W_\lambda(u;r)=0.$
\end{lem}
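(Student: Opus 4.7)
The plan is to prove \eqref{DerOfWeiss} by direct differentiation; monotonicity is then immediate from non-negativity of the integrand. Write $D(r) = \int_{B_r}|\nabla u|^2$, $H(r) = \int_{\partial B_r}u^2$, and $a = d-2+2\lambda$, so that $W_\lambda(u;r) = r^{-a}D(r) - \lambda r^{-a-1}H(r)$. Two standard distributional identities for thin-\OP solutions drive the calculation. First, since $\Delta u$ is a non-positive measure supported in $\Lambda(u)\subset\HPP$ where $u=0$, we have $u\,\Delta u \equiv 0$, and Green's identity gives $\int_{\partial B_r} u\,\partial_\nu u = D(r)$; differentiating $H$ in polar form then yields $H'(r) = \tfrac{d-1}{r}H(r) + 2D(r)$. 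Second, a Rellich--Pohozaev identity gives $D'(r) = \tfrac{d-2}{r}D(r) + 2\int_{\partial B_r}(\partial_\nu u)^2$, obtained by integrating against the vector field $X(x)=x$; the distributional contribution $(x\cdot\nabla u)\,\Delta u$ vanishes because on $\Lambda(u)$ the minimum condition $0 = u(q)\le u$ on $\HPP$ forces $\nabla' u=0$, while $x_d=0$ on $\HPP$, so $x\cdot\nabla u = x'\cdot\nabla' u = 0$ on the support of $\Delta u$.

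Plugging these into the direct derivative of $W_\lambda$ and collecting coefficients of $D(r)$, $H(r)$, and $\int_{\partial B_r}(\partial_\nu u)^2$, the scaling exponents telescope into
$$W_\lambda'(u;r) = 2r^{-a}\!\int_{\partial B_r}(\partial_\nu u)^2 - 4\lambda r^{-a-1}D(r) + 2\lambda^2 r^{-a-2}H(r).$$
Rescaling to the unit sphere via $u_r(y) = r^{-\lambda}u(ry)$ and using $\partial_\nu u_r = r^{1-\lambda}(\partial_\nu u)\circ(r\cdot)$ on $\partial B_1$ converts the three boundary integrals on $\partial B_r$ into $r^{1-a}\int_{\partial B_r}(\partial_\nu u)^2 = \int_{\partial B_1}(\partial_\nu u_r)^2$, $r^{-a}D(r) = \int_{\partial B_1}u_r\,\partial_\nu u_r$, and $r^{-a-1}H(r) = \int_{\partial B_1}u_r^2$. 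Expanding $\frac{2}{r}\int_{\partial B_1}(\partial_\nu u_r - \lambda u_r)^2$ into its three quadratic terms then reproduces the display above, establishing \eqref{DerOfWeiss}, after which non-decrease of $W_\lambda(u;\cdot)$ is immediate.

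For the final claim, factor $W_\lambda(u;r) = \frac{H(r)}{r^{d-1+2\lambda}}(N(r) - \lambda)$, where $N(r) = rD(r)/H(r)$ is the Almgren frequency. A short computation using $H'(r) = \tfrac{d-1}{r}H(r) + 2D(r)$ gives $\frac{d}{dr}\log\frac{H(r)}{r^{d-1+2\lambda}} = \frac{2}{r}(N(r)-\lambda) \ge 0$, since Almgren's monotonicity makes $N$ non-decreasing with $N(0^+) = \lambda$ at points of $\Lambda_\lambda(u)$; hence the first factor is bounded on $(0,1)$, while $N(r)\to\lambda$ by definition, so the product tends to zero. The main technical obstacle throughout is the rigorous justification of $u\,\Delta u = 0$ and $(x\cdot\nabla u)\,\Delta u = 0$ as identities of Radon measures, which rests on the optimal one-sided $C^{1,1/2}$ regularity of \cite{AC} so that the continuous traces $u$ and $x\cdot\nabla u$ can legitimately be paired against $\Delta u$.
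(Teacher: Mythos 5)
Your proof is correct, and it is essentially the standard argument behind the result the paper simply cites (Theorems 1.4.1 and 1.5.4 in Garofalo--Petrosyan): the derivative formula \eqref{DerOfWeiss} via the identities $u\,\Delta u=0$ and $(x\cdot\nabla u)\,\Delta u=0$ (the latter justified, as you note, because $\nabla' u=0$ on $\Lambda(u)$ and $x_d=0$ there, with the Rellich--Pohozaev computation carried out in $B_r^{\pm}$), followed by the scaling bookkeeping that turns the three boundary integrals on $\partial B_r$ into the single square on $\partial B_1$; I checked the coefficients $-4\lambda$ and $2\lambda^2$ and they are right. The only point where you deviate slightly is the last claim: rather than passing to a blow-up limit and using that $W_\lambda$ vanishes on $\lambda$-homogeneous solutions, you factor $W_\lambda(u;r)=\frac{H(r)}{r^{d-1+2\lambda}}(N(r)-\lambda)$ and use Almgren monotonicity with $N(0^+)=\lambda$ to see that the first factor is monotone (hence bounded near $0$) while the second tends to $0$; this is a clean, compactness-free route that buys a quantitative link between $W_\lambda$ and the frequency deficit, at the cost of importing Almgren's monotonicity from \cite{ACS}, which the paper's framework already assumes.
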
 

Under the same assumptions as in Lemma \ref{PropertyWeiss}, we can integrate \eqref{DerOfWeiss} and apply H\"older's inequality to get
\begin{equation}\label{ChangeInRadial}
\int_{\partial B_1}|u_r-u_s|\le (\log(r/s))^{\frac 12}[W_\lambda(u;r)-W_\lambda(u;s)]^{\frac 12}
\end{equation} for $0<s<r<1.$

\subsection{The boundary layer problem}\label{TBLP}
When dealing with the linearized problem, we need to work with  polynomials that may fail the constraints in \eqref{TOP}. These polynomials form the following spaces:
\begin{equation}\label{EvenHomPolyn}
\PE=\{p: \text{ }\Delta p=0 \text{ and } x\cdot\nabla p=2kp \text{ in $\R^d$,}   \text{ and } p(\cdot,x_d)=p(\cdot,-x_d)\}
\end{equation} and
\begin{align}\label{OddHomPolyn}
\PO=\{p:    \text{ }&\Delta p=0 \text{ in $\{x_d\neq0\}$, } x\cdot\nabla p=(2k+1)p \text{ in $\R^d$,} 
\\&p(\cdot,0)= 0 \text{ and } p(\cdot,x_d)=p(\cdot,-x_d)\}.\nonumber
\end{align}

Compared with \eqref{EvenHomSolution} and \eqref{OddHomSolution}, polynomials in $\PE$ may fail to be non-negative along $\HPP$, and polynomials in $\PO$ may fail to be superharmonic. We `correct' such error by solving a thin obstacle problem in a boundary layer on the sphere $\Sph$. 

To be precise, for small $\eta>0$, the \textit{boundary layer of width $\eta$} is defined as 
$$L_\eta=\{(x',x_d):|x_d|<\eta|x|\}.$$This is the region trapped by the following surfaces 
$$S^+_\eta=\{(x',x_d):x_d=\eta|x|\} \text{ and } S^-_\eta=\{(x',x_d):x_d=-\eta|x|\}.$$
When there is no ambiguity, we denote their intersections with $\Sph$ by the same expressions.

\begin{rem}Given $m\in\N$, we fix $\eta$ small enough, depending only on $m$ and $d$, so that  the first Dirichlet eigenvalue of the  the operator $\Delta_{\Sph}+\lambda(m)$ in $L_\eta$ is negative, where $\Delta_{\Sph}$ is the spherical Laplacian, and $$\lambda(m)=m(m+d-2).$$ 
\end{rem}

In particular, the following is well-defined:
\begin{defi}\label{ReplacementOfp}
Given $m\in\N$ and $p\in\mathcal{P}_m$, the \textit{replacement of $p$}, denoted by $\overline{p}$, is the minimizer of the following energy
$$w\mapsto \int_{\Sph}|\nabla_{\Sph} w|^2-\lambda(m)w^2$$over functions satisfying $w\ge 0$ on $\HPP$ and $w=p$ outside $L_\eta$.  

Here $\nabla_{\Sph}$ denotes the tangential gradient on $\Sph$.

Denote the difference of $p$ and its replacement $\pbar$ by  $v_p$, that is,
$$v_p=\overline{p}-p.$$
\end{defi}

\begin{rem}\label{LongRemarkForReplacement}
The replacement solves the thin obstacle problem for the operator $\Delta_{\Sph}+\lambda(m)$ in the boundary layer, namely, 
$$\begin{cases}
(\Delta_{\Sph}+\lambda(m))\overline{p}\le 0 &\text{in $L_\eta$,}\\
\overline{p}\ge0 &\text{on $(\Sph)',$}\\
(\Delta_{\Sph}+\lambda(m))\overline{p}=0 &\text{in $L_\eta\cap(\{x_d\neq 0\}\cup\{\overline{p}>0\}).$}
\end{cases}$$
As a result,  we can view $(\Delta_{\Sph}+\lambda(m))\overline{p}$ as a signed measure, supported along $S^{\pm}_\eta$ and $\Sph\cap\{\pbar=0\}'$, of the following form
$$(\Delta_{\Sph}+\lambda(m))\overline{p}=f_pd\mathcal{H}^{d-2}|_{S^{\pm}_\eta}+g_pd\mathcal{H}^{d-2}|_{(\Sph)'}.$$
With an abuse of notation, we denote the $m$-homogeneous extension of $\overline{p}$ and the corresponding extensions of $f_p$ and $g_p$ by the same notations. This way, we have 
$$\Delta\overline{p}=f_pd\mathcal{H}^{d-1}|_{S^{\pm}_\eta}+g_pd\mathcal{H}^{d-1}|_{\HPP}.$$
For each $p\in\mathcal{P}_m$, the following constant, $\kappa_p$, measures the extent to which $p$ fails to be a solution to the thin \OP:
\begin{equation*}\label{Kappa}
\kappa_p:=\int_{S^+_\eta\cap\Sph}f_pd\mathcal{H}^{d-2}.
\end{equation*} 

For these functions and constants, we often omit the subscript $p$ when there is no ambiguity. 
\end{rem}

\begin{lem}\label{Lem22}
Using the notations in Definition \ref{ReplacementOfp} and Remark \ref{LongRemarkForReplacement}, we have

1) $v_p\ge 0$ on $\Sph$, $\kappa_p\ge 0.$

2) There are  universal constants, $c$ and $C$, such that 
$$c\kappa_p\le f_p\le C\kappa_p\text{ on $S^{\pm}_\eta\cap\Sph$}.$$ \end{lem}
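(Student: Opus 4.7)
The plan is to base both parts on the weak maximum principle for the operator $L := \Delta_{\Sph} + \lambda(m)$ on $L_\eta$, which holds by our choice of $\eta$ making the first Dirichlet eigenvalue of $L$ negative. First I would record this preliminary: if $w \in H^1(L_\eta)$ satisfies $Lw \le 0$ in the distributional sense with $w \ge 0$ on $\partial L_\eta$, then $w \ge 0$ in $L_\eta$. This follows by testing $Lw \le 0$ against $w^-$ to obtain $\int_{L_\eta}|\nabla w^-|^2 \le \lambda(m)\int_{L_\eta}(w^-)^2$, and comparing with the Poincaré inequality $\int |\nabla w^-|^2 \ge \mu_1 \int (w^-)^2$ where $\mu_1 > \lambda(m)$. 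The same argument works on the subdomain $L_\eta^+ := L_\eta \cap \{x_d > 0\}$, whose first Dirichlet eigenvalue is at least as large.

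To show $v_p \ge 0$, I would split by parity of $m$. \emph{If $m$ is even}, then $p$ is a harmonic polynomial smooth across $\HPP$, so $Lp \equiv 0$ on $\Sph$, giving $Lv_p = L\pbar \le 0$ in $L_\eta$ by Remark \ref{LongRemarkForReplacement}; with $v_p = 0$ on $\partial L_\eta$, the maximum principle yields $v_p \ge 0$. \emph{If $m$ is odd}, then $p$ vanishes on $\HPP$, so $v_p = \pbar \ge 0$ on $\HPP \cap L_\eta$ by the obstacle constraint; in the open set $L_\eta^+$ both $p$ and $\pbar$ are classically $L$-harmonic (all distributional components of $L\pbar$ and $Lp$ are supported on $\HPP \cup S^{\pm}_\eta$, which does not meet $L_\eta^+$), so $Lv_p = 0$ there, and the maximum principle on $L_\eta^+$ with $v_p \ge 0$ on $\partial L_\eta^+$ gives $v_p \ge 0$; the $x_d$-symmetry of the minimization (and uniqueness) forces $\pbar$ to be even, extending the inequality to $L_\eta^-$. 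For $\kappa_p \ge 0$: since $v_p \ge 0$ in $L_\eta$ with $v_p = 0$ on $S^{\pm}_\eta$, the outward normal derivative taken from the $L_\eta$-side is non-positive, so the jump formula gives $f_p = -\partial_\nu v_p\big|_{L_\eta\text{-side}} \ge 0$ pointwise, whence $\kappa_p \ge 0$.

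For part 2, I would invoke the boundary Harnack principle for $L$ at the smooth surface $S^+_\eta$. Fix once and for all a positive reference function $\varphi$ that is $L$-harmonic in $L_\eta^+$ and vanishes on $S^+_\eta$. Since $v_p \ge 0$ is also $L$-harmonic in $L_\eta^+$ and vanishes on $S^+_\eta$, boundary Harnack provides universal constants with
\[
c\, \frac{v_p(x_0)}{\varphi(x_0)} \le \frac{v_p(x)}{\varphi(x)} \le C\, \frac{v_p(x_0)}{\varphi(x_0)} \quad \text{on } L_\eta^+,
\]
for a fixed interior reference point $x_0$. Passing to the trace at $S^+_\eta$ via $C^{1,\alpha}$-regularity and Hopf's lemma identifies these ratios with $f_p/f_\varphi$, so $f_p(x_1)/f_p(x_2)$ is bounded above and below by universal constants, uniformly in $x_1, x_2 \in S^+_\eta$ and in $p$. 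Combined with $\kappa_p = \int_{S^+_\eta} f_p\, d\mathcal{H}^{d-2}$ and the universal area of $S^+_\eta$, this produces the pointwise bound $c\kappa_p \le f_p \le C\kappa_p$; the $S^-_\eta$ case follows by evenness. The main technical point is the odd-$m$ case of part 1, where $Lp$ carries a distributional density on $\HPP$ that would obstruct a direct global application of the maximum principle in $L_\eta$; retreating to $L_\eta^+$, where the equation is classical, is what makes the argument go through.
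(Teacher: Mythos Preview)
Your proposal is correct and follows essentially the same route as the paper: maximum principle for part 1, and Harnack/boundary Harnack applied to $v_p$ in $L_\eta^+$ for part 2. The only cosmetic difference is that the paper passes to the $m$-homogeneous extension in $\R^d$ and applies the classical Harnack and boundary Harnack for $\Delta$ on the annular region $(B_2\setminus B_{1/2})\cap L_\eta^+$, whereas you stay on $\Sph$ and work with the operator $L=\Delta_{\Sph}+\lambda(m)$ against a fixed reference solution; both yield the same pointwise comparability of $f_p$ and hence the bound by $\kappa_p$.
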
 

\begin{proof}
The first statement follows directly from the maximum principle. 

To see the second statement, we first note that $v$ is a non-negative harmonic function in $L_\eta^+.$ By the strong maximum principle, it suffices to consider the case when $v>0$ in $L_\eta^+.$

In this case, we can apply the Harnack principle to $v$ inside $(B_2\backslash B_{1/2})\cap L_\eta^+$ to get 
$c\le\frac{\sup_K v}{\inf_K v}\le C,$ where $K=\Sph\cap\{x_d=\frac{1}{2}\eta\}.$

If we denote by $\nu$ the unit normal along  $S_\eta^+$ that is exterior to $L_\eta$, by the boundary Harnack principle, we have 
\begin{equation}\label{22equation}c\inf_K v\le \partial_\nu v(x)\le C\sup_K v \quad \forall x\in\Sph\cap S_\eta^+.\end{equation}

Note that $f=\partial_\nu v$ along $S_\eta^+$, the conclusion follows. 
\end{proof} 

We also have the following bounds for $\kappa_p$:
\begin{lem}\label{BoundsForKappa}
Using the notations in Definition \ref{ReplacementOfp} and  Remark \ref{LongRemarkForReplacement}, and further assume $\|p\|_{\mathcal{L}^2(\Sph)}\le 1$,  we can find a universal constant $C$  such that 
$$\kappa_p\le C\sup_{\Sph} v_p.$$

Moreover, when $m$ is even, we have  $$(\sup_{\Sph}v_p)^{\frac{d-1}{2}}\le C\kappa_p.$$
\end{lem}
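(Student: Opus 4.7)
My plan is to separate the proof into the two inequalities, since the first is an immediate consequence of Lemma~\ref{Lem22} while the second is the substantive statement in which the evenness of $m$ plays an essential role.

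For the first inequality $\kappa_p\le C\sup_\Sph v_p$, I would extract from the proof of Lemma~\ref{Lem22} the pointwise bound $f_p(x)\le C\sup_K v_p$ for $x\in S^{\pm}_\eta\cap\Sph$, where $K=\Sph\cap\{x_d=\eta/2\}$. This is exactly what the boundary Harnack step in that proof gives. Since $\mathcal H^{d-2}(S^+_\eta\cap\Sph)$ depends only on $d$ and $\eta$ (hence is a universal constant) and $K\subset\Sph$, integration yields
\[
\kappa_p \;=\; \int_{S^+_\eta\cap\Sph} f_p\,d\mathcal H^{d-2} \;\le\; C\sup_K v_p \;\le\; C\sup_\Sph v_p.
\]

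For the second inequality, I would combine both directions of Lemma~\ref{Lem22} with a Harnack chain in $L_\eta^+$: the pointwise comparison $f_p\asymp\kappa_p$ together with an interior Harnack applied to the nonnegative harmonic function $v_p$ (after degree-$m$ homogeneous extension) yields $\sup_K v_p\asymp\kappa_p$. The desired bound therefore reduces to
\[
\sup_\Sph v_p\;\le\; C(\sup_K v_p)^{2/(d-1)}.
\]
Since $v_p\ge 0$ vanishes on $S^{\pm}_\eta$ and satisfies $(\Delta_\Sph+\lambda(m))v_p\le 0$ in $L_\eta$, its supremum is attained on the equator contact set $\{\overline p=0\}'$, where $v_p=-p$. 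When $m=2k$ is even, $p$ extends smoothly across $\HPP$, and its restriction to $(\Sph)'\subset\mathbb{S}^{d-2}$ is an honest polynomial of degree $2k$ whose $L^2$-norm is controlled by $\|p\|_{L^2(\Sph)}\le 1$. A Bernstein-type inequality on $\mathbb{S}^{d-2}$ then lower-bounds the $\mathcal H^{d-2}$-size of the super-level set $\{-p\ge M/2\}$ (with $M=\sup_\Sph v_p$) in terms of $M$, and propagating this size estimate through the layer via a Poisson-kernel-type argument in the cone $L_\eta^+$ produces the required lower bound on $\sup_K v_p$, hence on $\kappa_p$.

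The main obstacle will be the identification of the sharp exponent $(d-1)/2$: it requires matching the Bernstein scaling on $\mathbb{S}^{d-2}$ with the cone-geometry Poisson kernel in $L_\eta^+$, rather than a flat-slab estimate which would give a different exponent. The evenness of $m$ is essential here since it forces $\partial_{x_d}v_p=0$ on the non-contact portion of $(\Sph)'$ by the symmetry of $p$ and $\overline p$, so the mixed boundary value problem for $v_p$ in the upper half-layer effectively becomes a Dirichlet problem with nontrivial data only on the contact set; this structure makes the quantitative propagation tractable. In the odd case this symmetric reduction fails, which is presumably why no analogous lower bound is asserted there.
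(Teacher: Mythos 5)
Your handling of the first inequality is exactly the paper's (it is read off from the boundary Harnack estimate \eqref{22equation}), and the skeleton of your second part --- $\sup_{\Sph}v_p$ equals $M:=\sup_{(\Sph)'}(-p)$ and is attained on the contact set, the degree-$2k$ polynomial $-p$ stays above $M/2$ on a set of diameter $\sim M^{1/2}$ around its maximum, then propagate through the layer and convert to $\kappa_p$ via boundary Harnack --- is also the paper's. The gap is in the propagation step, which is precisely where the exponent $\tfrac{d-1}{2}$ has to be produced. If, as you propose, you view $v_p$ in the upper half-layer as ``effectively a Dirichlet problem with nontrivial data only on the contact set'' and lower-bound it by the harmonic measure (Poisson kernel) of the superlevel set $E\subset(\Sph)'$, you only get $v_p(\hbox{mid-layer point})\ge c\,M\,\mathcal H^{d-2}(E)\ge c\,M\cdot M^{\frac{d-2}{2}}=c\,M^{\frac d2}$, hence only $(\sup_{\Sph}v_p)^{d/2}\le C\kappa_p$: the harmonic measure of a boundary set of $(d-2)$-measure $\mu$ seen from unit distance scales like $\mu$, not like $\mu^{\frac{d-3}{d-2}}$. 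Your proposed remedy --- replacing a ``flat-slab'' kernel by a ``cone-geometry'' kernel in $L_\eta^+$ --- does not repair this: on $\Sph$ the layer is a band of width $\sim\eta$ around the equator, and near the maximum point at the relevant scale $M^{1/2}\ll\eta$ it is a flat slab/half-space to leading order; moreover the exponent in the lemma does not depend on the aperture $\eta$, while genuine cone exponents do.

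What actually yields $\tfrac{d-1}{2}$ (and what the paper does) is a thickening step before the barrier argument. Since $(\SphLap+\lambda(m))v_p\le 0$ in the interior of the layer (the only positive singular part of $\Delta\pbar$ sits on $S^\pm_\eta$, at distance $\sim\eta\gg M^{1/2}$), the bound $v_p\ge -p\ge\tfrac78M$ on the thin ball $B'_{cM^{1/2}}(e_1)$ upgrades to $v_p\ge c\,M$ on the solid ball $B_{cM^{1/2}}(e_1)\cap\Sph$; this, rather than a reduction to Dirichlet data, is where the evenness and the thin-obstacle structure of $v_p$ enter. One then compares with a truncated Green's function of $\SphLap$ with pole at $e_1$ vanishing outside $B_\eta(e_1)\cap\Sph$: matching the value $\sim M$ on $\partial B_{cM^{1/2}}(e_1)$ forces the coefficient to be of size $M\cdot M^{\frac{d-3}{2}}$, so $v_p$ at a mid-layer point over $e_1$ is at least $c\,M^{\frac{d-1}{2}}$, and \eqref{22equation} turns this into $\kappa_p\ge c\,M^{\frac{d-1}{2}}$. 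Equivalently, after even reflection the superlevel set is an \emph{interior} codimension-one set and the correct object is its capacity (equilibrium potential), which for a $(d-2)$-dimensional disc of radius $\rho$ scales like $\rho^{d-3}$, not its harmonic measure, which scales like $\rho^{d-2}$. Two smaller points: no Bernstein/Remez input is needed, since the tangential gradient of $p$ vanishes at the interior maximum $e_1$ and $\|p\|_{C^2}$ is universally bounded, giving an actual ball $B'_{cM^{1/2}}(e_1)$ on which $-p\ge\tfrac78 M$ (a ball, not merely a measure bound, is what the barrier comparison uses); and your identity $\sup_K v_p\asymp\kappa_p$ should be accompanied by the remark that the Harnack chain runs inside the open band where $K$ is compactly contained, with the trivial case $v_p\equiv0$ treated separately.
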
 

\begin{proof}
Directly  from \eqref{22equation}, we have the bound $\kappa_p\le C\sup_{\Sph} v_p.$

For the second comparison, we note that when $m$ is even, $v=\pbar-p$ solves the thin obstacle problem in $L_\eta$ with $-p$ as the obstacle and $0$ as boundary data. Suppose $$\eps=-p(e_1)=\sup_{(\Sph)'}(-p),$$ where $e_1$ is the unit vector in the $x_1$-direction, then $\sup_{\Sph} v\le\eps$.   Regularity of $p$ gives  $v\ge -p\ge\frac 78 \eps$ in $B'_{c\eps^{1/2}}(e_1)\cap\Sph$, which leads to $v\ge \frac 78 \eps$ in $B_{c\eps^{1/2}}(e_1)\cap\Sph$ by a scaling argument. 

From here we have $v(e_1,\frac{1}{2}\eta)\ge c\eps^{\frac{d-1}{2}}$ by comparing with a truncation and rescaling of the Green's function for $\SphLap$ with a pole at $e_1$ and vanishes outside $B_\eta(e_1)\cap\Sph.$ Combining this with \eqref{22equation} gives the desired result.
\end{proof} 

With this, we have the following control over the size of $\|v\|_{H^1}$ in terms of $\kappa$:

\begin{lem}\label{H1Forv}
Using the notations in Definition \ref{ReplacementOfp} and  Remark \ref{LongRemarkForReplacement}, and further assume $\|p\|_{\mathcal{L}^2(\Sph)}\le 1$, we have a universal constant $C$ such that 
$$\|v_p\|_{H^1(B_1)}\le C\kappa_p^{\frac{1}{2}+\frac{1}{d-1}} \text{ if $p\in \PE$}$$ and 
$$\|v_p\|_{H^1(B_1)}\le C\kappa_p^{1/2} \text{ if $p\in \PO$}.$$
\end{lem}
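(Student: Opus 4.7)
First, since $v_p$ is the $m$-homogeneous extension of a function on $\Sph$ vanishing on $\partial L_\eta\cap\Sph$, a direct radial integration gives $\|v_p\|_{H^1(B_1)}^2 \sim \|v_p\|_{L^2(\Sph)}^2 + \|\nabla_{\Sph} v_p\|_{L^2(\Sph)}^2$, so the plan reduces to bounding the $H^1$ norm on $\Sph$.

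Next, I would derive an energy identity by testing the distributional equation
\[ (\Delta_{\Sph} + \lambda(m))\,v_p \;=\; f_p\,\mathcal{H}^{d-2}|_{S^\pm_\eta\cap\Sph} \;+\; h_p\,\mathcal{H}^{d-2}|_{(\Sph)' \cap L_\eta} \]
against $v_p$ itself. Here $h_p = g_p$ in the even case, while in the odd case $h_p = g_p - 2\phi_p$ with $\phi_p = \partial_{x_d}p|_{x_d=0^+}|_{\Sph}$ arising from the singular part of $\Delta p$ on $\HPP$ (absent for $p\in\PE$). The vanishing of $v_p$ on $S^\pm_\eta$ kills the first source, and the complementarity $\pbar g_p = 0$ (together with $v_p = -p$ on the contact set in the even case, or $v_p = \pbar$ on $(\Sph)'$ in the odd case) disposes of the obstacle part. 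In the odd case the Euler--Lagrange equality $\int\nabla_{\Sph}\pbar\cdot\nabla_{\Sph} v_p = \lambda(m)\int\pbar v_p$ (available because $p|_{(\Sph)'} = 0$ makes the perturbation $(1\pm\epsilon)\pbar\mp\epsilon p$ admissible in both directions), combined with a second test of the equation for $\pbar$ against $\pbar$ itself (using $\pbar = p$ on $S^\pm_\eta$), yields
\[ E(v_p)\;:=\;\|\nabla_{\Sph} v_p\|^2 - \lambda(m)\|v_p\|^2 \;=\; \int_{S^\pm_\eta\cap\Sph} p\,f_p \;\le\; C\kappa_p, \]
using $\|p\|_{L^\infty(\Sph)}\le C$ (from $\|p\|_{L^2(\Sph)}\le 1$) and $\|f_p\|_{L^1}\le 2\kappa_p$ (Lemma \ref{Lem22}). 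In the even case, the analogous computation gives $E(v_p) = -\int_{(\Sph)'} v_p g_p \le \sup v_p\cdot\|g_p\|_{L^1}\le C\kappa_p\sup v_p$, where $\|g_p\|_{L^1}\le 2\kappa_p$ follows from the divergence identity for $\pbar$ (test against $1$, and use $\int_{\Sph} p = 0$ for $p\in\PE$).

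To close, I would bound $\|v_p\|_{L^2(\Sph)}^2 \le \sup v_p\cdot\|v_p\|_{L^1(\Sph)}$ and combine with $\|v_p\|_{L^1}\le C\kappa_p$ from the divergence identity for $v_p$ together with $v_p\ge 0$ and $g_p\le 0$. Since $\|\nabla_{\Sph} v_p\|^2 = E(v_p) + \lambda(m)\|v_p\|^2$, Lemma \ref{BoundsForKappa} finishes the argument: in the even case $\sup v_p\le C\kappa_p^{2/(d-1)}$ gives $\|v_p\|_{H^1(\Sph)}^2\le C\kappa_p^{1+2/(d-1)}$; in the odd case, the universal estimate $\sup v_p\le C$ combined with the tighter $E(v_p)\le C\kappa_p$ gives $\|v_p\|_{H^1(\Sph)}^2\le C\kappa_p$.

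The hard part will be justifying $\|v_p\|_{L^1}\le C\kappa_p$ for the odd case. The divergence identity introduces the indefinite term $-2\int_{(\Sph)'}\phi_p$, which one absorbs via the identity $\lambda(m)\int_\Sph p = 2\int\phi_p$ obtained by integrating $(\Delta_{\Sph}+\lambda(m))p = 2\phi_p\mathcal{H}^{d-2}|_{(\Sph)'}$. This reduces the estimate to $\lambda(m)\int_{\Sph}\pbar = 2\kappa_p + \int_{(\Sph)'}g_p\le 2\kappa_p$, from which one bounds $\int v_p = \int\pbar - \int p$ using $\pbar\ge 0$ on $(\Sph)'$ and the rigidity $\pbar=p$ outside $L_\eta$.
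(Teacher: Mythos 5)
Your energy estimates are essentially the paper's, with two harmless variations: in the even case you bound $-\int_{(\Sph)'}g_p\le 2\kappa_p$ by testing the equation for $\pbar$ against the constant $1$ (the paper tests against the degree-$2k$ harmonic polynomial equal to $1$ on $(\Sph)'$, getting the same comparison), and in the odd case your two-sided variational equality $\int_{\Sph}\SphGrad\pbar\cdot\SphGrad v_p-\lambda(m)\pbar v_p=0$ gives the identity $E(v_p)=\int_{S^\pm_\eta\cap\Sph}p\,f_p$, whereas the paper only uses minimality to get the inequality $E(v_p)\le 2\int_{S^\pm_\eta\cap\Sph}p\,f_p$; either way $E(v_p)\le C\kappa_p$ in the odd case and $E(v_p)\le C\kappa_p\sup v_p\le C\kappa_p^{1+\frac{2}{d-1}}$ in the even case, exactly as in the paper. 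The problem is your final step from the energy bound to the $H^1$ bound.

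By writing $\|\SphGrad v_p\|_{L^2(\Sph)}^2=E(v_p)+\lambda(m)\|v_p\|_{L^2(\Sph)}^2$ and estimating $\|v_p\|_{L^2}^2\le \sup v_p\cdot\|v_p\|_{L^1}$, you have made the whole proof hinge on $\|v_p\|_{L^1(\Sph)}\le C\kappa_p$, and your sketched justification of this in the odd case does not work: after absorbing $-2\int_{(\Sph)'}\phi_p$ via $\lambda(m)\int_{\Sph}p=2\int_{(\Sph)'}\phi_p$ and discarding $\int_{(\Sph)'}g_p\le 0$, you are left needing $-\lambda(m)\int_{\Sph}p\le C\kappa_p$, i.e.\ $-2\int_{(\Sph)'}\phi_p\le C\kappa_p$, which is false in general: any nontrivial $p\in\PO^+$ (normalized in $\mathcal{L}^2(\Sph)$) satisfies $\pbar=p$, hence $\kappa_p=0$ and $v_p=0$, while $-\int_{(\Sph)'}\phi_p=-\int_{(\Sph)'}\partial_{x_d}p(\cdot,0^+)$ is positive of order one. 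The true identity $\lambda(m)\int_{\Sph}v_p=2\kappa_p+\int_{(\Sph)'}(g_p-2\phi_p)$ only closes because of the cancellation between $g_p$ and $2\phi_p$ on the contact set (where $g_p-2\phi_p=2\partial_{x_d}v_p(\cdot,0^+)\ge 0$ with no upper control), and your reduction throws that cancellation away. More importantly, the entire $L^1$ detour is unnecessary: $v_p$ is supported in $\overline{L_\eta}$ and vanishes on $S^\pm_\eta$, and $\eta$ was fixed precisely so that the first Dirichlet eigenvalue of $\SphLap+\lambda(m)$ in $L_\eta$ is negative; hence the quadratic form is coercive on such functions, $E(v_p)\ge c\,\|v_p\|_{H^1(\Sph)}^2$ with $c$ universal, and your energy bounds immediately give both stated estimates. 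This coercivity is exactly how the paper concludes, so the fix is short, but as written the odd case has a genuine gap.
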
 

\begin{proof} 
We first deal with the case when $p\in\PE$.

With the homogeneity and harmonicity of $p$, we have 
\begin{align*}
\int_{\Sph}|\SphGrad\pbar|^2-\lambda(2k)\pbar^2&=-\int_{\Sph}v(\SphLap+\lambda(2k))\pbar\\
&\le-\sup_{\Sph}v\int_{(\Sph)'}g.\nonumber
\end{align*}

Now let $P$ denote the $2k$-homogeneous harmonic polynomial with $P=1$ on $(\Sph)'$. Then 
$ \int_{\Sph}P(\SphLap+\lambda(2k))\pbar=\int_{\Sph}\pbar(\SphLap+\lambda(2k))P=0 $ gives 
$$-\int_{(\Sph)'}g\sim \int_{S^{\pm}_\eta}f\sim\kappa.$$ Thus 
\begin{equation}\label{EvenPbarEnergy}
\int_{\Sph}|\SphGrad\pbar|^2-\lambda\pbar^2\le C\kappa^{1+\frac{2}{d-1}}
\end{equation} 
by Lemma \ref{BoundsForKappa}. 

Using again the homogeneity and harmonicity of $p\in\PE$, this implies 
$$\int_{\Sph}|\SphGrad v|^2-\lambda v^2=\int_{\Sph}|\SphGrad\pbar|^2-\lambda\pbar^2\le C\kappa^{1+\frac{2}{d-1}}.$$
To conclude the estimate for $p\in\PE$, we simply note that the left-hand side is comparable to $\|v\|_{H_1(B_1)}^2$ when $\eta$ is chosen small.

Now we deal with the case when $p\in\PO$. 

In this case,  note that $p$ is admissible in the minimization problem  in Definition \ref{ReplacementOfp}, we have 
\begin{equation}\label{OddPbarEnergy}\int_{\Sph}|\nabla_{\Sph}\pbar|^2-\lambda(2k+1)\pbar^2\le\int_{\Sph}|\nabla_{\Sph}p|^2-\lambda(2k+1)p^2=0,
\end{equation}
which implies
$$\int_{\Sph}|\nabla_{\Sph}(\pbar-p)|^2-\lambda(\pbar-p)^2\le 2\int_{\Sph}p(\Delta_{\Sph}+\lambda)(\pbar-p)=2\int_{\Sph\cap S_{\eta}^{\pm}}pfd\mathcal{H}^{d-2}.
$$For the last equality, we used $p=0$ along $\HPP$, and $\Delta p=0$ away from $\HPP.$
This gives 
$$\int_{\Sph}|\nabla_{\Sph}v|^2-\lambda v^2\le C\kappa.$$We conclude by noting the left-hand side is comparable to $\|v\|_{H^1(B_1)}^2$ when $\eta$ is small.  \end{proof}

For $p\in\mathcal{P}_m$ with $\pbar\neq p$, we sometimes need to absorb the right-hand side $f_p$ from  Remark \ref{LongRemarkForReplacement}. To do so, we need some auxiliary functions. 

Firstly, let $\varphi_p:\Sph\to\R$ denote the projection of the normalized $f_p$ onto $\mathcal{P}_m$, that is, 
\begin{equation}\label{Defphip}
\varphi_p=\sum \langle \frac{f_p}{\kappa_p},p_j\rangle p_j,
\end{equation} where $\{p_j\}$ is an orthonormal basis for $\mathcal{P}_m$ in $\mathcal{L}^2(\Sph)$ and $$\langle\frac{f_p}{\kappa_p},p_j\rangle=\frac{1}{\kappa_p}\int_{\Sph\cap S^{\pm}_\eta}p_jf_pd\mathcal{H}^{d-2}.$$
 
 In particular, the difference $\frac{f_p}{\kappa_p}-\varphi_p$ is perpendicular to $\mathcal{P}_m$. Consequently, the theory of Fredholm implies that we can find a unique function $H_p$ on $\Sph$, that is even with respect to $x_d$ and satisfyies:
 
 If $m=2k$, then 
  \begin{equation}\label{DefHpEven}
(\SphLap+\lambda(2k))H_p=\frac{f_p}{\kappa_p}-\varphi_p \text{ on $\Sph$};
\end{equation} 

If $m=2k+1$, then 
  \begin{equation}\label{DefHpOdd}
(\SphLap+\lambda(2k+1))H_p=\frac{f_p}{\kappa_p}-\varphi_p \text{ on $(\Sph)^{\pm}$, and } H_p(\cdot,0)=0.
\end{equation} 

If we denote its $m$-homogeneous extension  also by $H_p$, then 
\begin{equation}\label{DefPhip}
\Phi_p:=H_p+\frac{1}{d+2m-2}\varphi_p(\frac{x}{|x|})|x|^{m}\log|x|
\end{equation}satisfies 
$$\Delta\Phi_p=\frac{f_p}{\kappa_p}d\mathcal{H}^{d-1}|_{S^{\pm}_\eta} \text{ in $\R^d$ if $m=2k$;}$$
and
$$\Delta\Phi_p=\frac{f_p}{\kappa_p}d\mathcal{H}^{d-1}|_{S^{\pm}_\eta} \text{ in $(\R^d)^{\pm}$, and }  \Phi_p(\cdot,0)=0 \text{ if $m=2k+1$.}$$
We often omit the subscript when there is no ambiguity.

For our argument, it is crucial that $f_p$ has a non-trivial projection into $\mathcal{P}_m$:
\begin{lem}\label{NontrivialProjection}
If $\kappa_p\neq 0$, then there are universal positive constants $c$ and $C$ such that 
$$c\le\|\varphi_p\|_{\mathcal{L}^2(\Sph)}\le C,
$$ and $$\|\Phi_p\|_{C^{0,1}(B_1)} \le C.$$
\end{lem}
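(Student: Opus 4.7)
The plan is to establish the three assertions in sequence: the upper bound on $\|\varphi_p\|_{\mathcal{L}^2(\Sph)}$, the lower bound on $\|\varphi_p\|_{\mathcal{L}^2(\Sph)}$, and the Lipschitz bound on $\Phi_p$. The upper bound is immediate: by Lemma~\ref{Lem22} we have $f_p/\kappa_p \le C$ on $S^{\pm}_\eta \cap \Sph$, and since $\{p_j\}$ is an orthonormal basis of the finite-dimensional space $\mathcal{P}_m$, each $p_j$ is uniformly bounded on $\Sph$ by a universal constant. Hence each coefficient $a_j = \int_{S^{\pm}_\eta\cap\Sph} p_j\,(f_p/\kappa_p)\,d\mathcal{H}^{d-2}$ is universally bounded, and summing the finitely many squares gives $\|\varphi_p\|^2_{\mathcal{L}^2(\Sph)} = \sum_j a_j^2 \le C$.

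The lower bound is the substantive step and I expect it to be the main obstacle, since a priori the projection of a positive measure onto a finite-dimensional subspace need not be bounded away from zero. The idea is to exhibit a distinguished \emph{zonal} element $q \in \mathcal{P}_m$ whose restriction to $S^{\pm}_\eta \cap \Sph$ is a nonzero constant, turning the pairing $\langle\varphi_p,q\rangle_{\mathcal{L}^2(\Sph)}$ into an explicit calculation. Let $G$ be the $m$-th Gegenbauer polynomial $C_m^{(d-2)/2}$ when $m$ is even, and its even extension $u\mapsto C_m^{(d-2)/2}(|u|)$ when $m$ is odd; both satisfy the ODE $(1-u^2)G''-(d-1)uG'+\lambda(m)G=0$ away from $u=0$. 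Then $q(x):=|x|^m G(x_d/|x|)$ is $m$-homogeneous, harmonic on $\{x_d\ne 0\}$, and even in $x_d$; in the odd case it also vanishes along $\HPP$ since the odd Gegenbauer polynomial satisfies $C_m^{(d-2)/2}(0)=0$. Hence $q \in \mathcal{P}_m$, and by construction $q|_{S^{\pm}_\eta\cap\Sph} = G(\eta)$, which is nonzero provided $\eta$ avoids the finitely many zeros of $G$ on $(0,1)$---a constraint compatible with the universal choice of $\eta$ already in force. Since $\int_{S^{\pm}_\eta\cap\Sph} f_p/\kappa_p\,d\mathcal{H}^{d-2} = 2$ by the definition of $\kappa_p$ together with the $x_d$-evenness of $f_p$, one computes
\[
\langle \varphi_p,q\rangle_{\mathcal{L}^2(\Sph)} = \int_{S^{\pm}_\eta\cap\Sph} q\,(f_p/\kappa_p)\,d\mathcal{H}^{d-2} = 2\,G(\eta),
\]
and Cauchy--Schwarz yields $\|\varphi_p\|_{\mathcal{L}^2(\Sph)} \ge 2|G(\eta)|/\|q\|_{\mathcal{L}^2(\Sph)} \ge c > 0$.

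For the Lipschitz bound I would combine elliptic regularity for $H_p$ with a direct computation on the logarithmic term. The Fredholm equation for $H_p$ is solvable thanks to the subtraction of $\varphi_p$ (which exhausts the relevant kernel $\mathcal{P}_m$), and its right-hand side $f_p/\kappa_p-\varphi_p$ is a bounded density on the smooth codimension-$1$ submanifold $S^{\pm}_\eta\cap\Sph$ plus a smooth function of universal $\mathcal{L}^\infty$-size from the two previous steps. Classical elliptic regularity for measure-data problems then gives $H_p \in W^{1,\infty}(\Sph)$ with universal bound, so its $m$-homogeneous extension lies in $C^{0,1}(B_1)$ with universal bound. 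For the remaining term, the $m$-homogeneity of $\varphi_p$ allows us to rewrite $\varphi_p(x/|x|)|x|^m \log|x| = \varphi_p(x)\log|x|$, and differentiating directly produces $|\nabla(\varphi_p(x)\log|x|)| \le C|x|^{m-1}(1+|\log|x||)$, which is bounded on $B_1$. Summing yields $\|\Phi_p\|_{C^{0,1}(B_1)} \le C$.
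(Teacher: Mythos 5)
Your proposal is correct and follows essentially the same route as the paper's (very terse) proof: the upper bounds for $\varphi_p$ and $\Phi_p$ come from the definitions together with Lemma \ref{Lem22}, and the lower bound comes from pairing $f_p/\kappa_p$ with a fixed element of $\mathcal{P}_m$ that does not vanish on $S^{\pm}_\eta\cap\Sph$ --- your explicit Gegenbauer zonal function, constant on $S^{\pm}_\eta$, is just a concrete instance of the paper's ``$q\in\mathcal{P}_m$ with $q\ge c>0$ along $S^{\pm}_\eta$ for $\eta$ small''. One small caution on the last step: a merely bounded density on a hypersurface yields in general only log-Lipschitz (not Lipschitz) potentials, so to get the stated $C^{0,1}$ bound for $H_p$ one should note that $f_p/\kappa_p$ is in fact uniformly H\"older on $S^{\pm}_\eta\cap\Sph$ --- which follows from Schauder estimates for $v_p=\pbar-p$ in the band near $S^{\pm}_\eta$ (where it solves $(\SphLap+\lambda(m))v_p=0$, vanishes on $S^{\pm}_\eta$, and is bounded by $C\kappa_p$ via the Harnack argument in the proof of Lemma \ref{Lem22}) --- a detail the paper's one-line proof also leaves implicit.
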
 
\begin{proof}
Both upper bounds follow from the definitions of $\varphi$, $\Phi$ and Lemma \ref{Lem22}.

For the lower bound for $\varphi$, it suffices to note that we can find $q\in\mathcal{P}_m$ with $\|q\|_{\mathcal{L}^2(\Sph)}=1$ and $q\ge c>0$ along $S^\pm_\eta$ if $\eta$ is small. 
\end{proof} 

\begin{lem}\label{PointwiseApprox}
If $u$ solves \eqref{TOP} in $B_1$, and $p\in\PE\cup\PO$, then $$\|u-\pbar\|_{\mathcal{L}^\infty(B_{1/2})}+\|u-\pbar\|_{H^1(B_{1/2})}\le C(\|u-\pbar\|_{\mathcal{L}^2(B_1)}+\kappa_p)$$ for a universal constant $C$. 

\end{lem}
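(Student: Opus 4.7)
Set $w := u - \pbar$. I plan to derive both bounds from a weighted Caccioppoli-type energy estimate followed by a De Giorgi truncation, using three structural facts: (i) $\Delta u$ is a non-positive measure supported in $\Lambda(u)\subset\{u=0\}\cap\HPP$; (ii) by Remark~\ref{LongRemarkForReplacement}, $\Delta\pbar = f_p\,d\mathcal{H}^{d-1}|_{S^\pm_\eta} + g_p\,d\mathcal{H}^{d-1}|_{\HPP}$, where $|f_p|\le C\kappa_p$ by Lemma~\ref{Lem22}, $g_p\le 0$, and the complementarity $\pbar\,g_p = 0$ holds on $\HPP$; (iii) $u\ge 0$ on $\HPP$.

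For the $H^1$ bound, fix a cutoff $\zeta\in C_c^\infty(B_{3/4})$ with $\zeta\equiv 1$ on $B_{1/2}$ and test the distributional identity $-\Delta w = -\Delta u + \Delta\pbar$ against $w\zeta^2$. The $\Delta u$-contribution simplifies favorably: since $u \equiv 0$ on $\operatorname{supp}(\Delta u)$,
\[
-\int w\zeta^2\,d(\Delta u) \,=\, \int \pbar\,\zeta^2\,d(\Delta u) \,\le\, 0,
\]
using $\pbar\ge 0$ on $\HPP$ and $\Delta u\le 0$. The $g_p$-part of $\Delta\pbar$ reduces via $\pbar\,g_p=0$ to $\int u\zeta^2\,g_p\,d\mathcal{H}^{d-1}|_{\HPP}\le 0$. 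Only the $f_p$-contribution on $S^\pm_\eta$ then remains to be quantitatively estimated, and by the trace inequality plus Young,
\[
\Bigl|\int w\zeta^2 f_p\,d\mathcal{H}^{d-1}|_{S^\pm_\eta}\Bigr| \,\le\, C\kappa_p\,\|w\|_{\mathcal{L}^2(S^\pm_\eta\cap B_{3/4})} \,\le\, \tfrac14\!\int |\nabla w|^2\zeta^2 + C\bigl(\|w\|_{\mathcal{L}^2(B_1)}^2 + \kappa_p^2\bigr).
\]
Absorbing the gradient terms delivers the desired $H^1$ estimate.

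For the $\mathcal{L}^\infty$ bound, I upgrade via De Giorgi truncation, testing against $(w-k)^+\zeta^2$ and $(-w-k)^+\zeta^2$ for $k\ge 0$. On $\operatorname{supp}(\Delta u)$, $w = -\pbar \le 0$, so $(w-k)^+\equiv 0$ there; while $(-w-k)^+\ge 0$ combined with $\Delta u\le 0$ gives the right sign for that term. On $\operatorname{supp}(g_p)\subset\{\pbar=0\}\cap\HPP$, $w = u \ge 0$, so $(-w-k)^+ \equiv 0$ there (since $k\ge 0$); and $(w-k)^+g_p = (u-k)^+g_p \le 0$ by the respective signs. In both truncated energies only the bounded $f_p$-source of total mass $O(\kappa_p)$ requires quantitative estimation, and a standard De Giorgi/Moser iteration then promotes the $H^1$ bound to $\|w\|_{\mathcal{L}^\infty(B_{1/2})}\le C\bigl(\|w\|_{\mathcal{L}^2(B_1)}+\kappa_p\bigr)$.

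The delicate step will be the simultaneous sign-analysis of the $\Delta u$ and $g_p$ measures for both truncations, which hinges on the interplay of $u\ge 0$ on $\HPP$, the complementarity $\pbar\,g_p = 0$, and the signs of $\Delta u$ and $g_p$. Once these are arranged so that only the bounded $f_p$-source is actively estimated, the remainder of the argument is entirely classical.
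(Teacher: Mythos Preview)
Your approach is correct and rests on the same sign observations as the paper, but the execution is different. The paper does not run a Caccioppoli/De~Giorgi iteration on $w=u-\pbar$; instead it packages your sign analysis into the single distributional inequality
\[
\Delta|u-\pbar|\ \ge\ -f_p\,d\mathcal{H}^{d-1}\big|_{S^\pm_\eta}\quad\text{in }B_1,
\]
obtained by splitting into $\{u>\pbar\}$ (where $u>0$ on $\HPP$, hence $\Delta u=0$ and $\Delta w=-\Delta\pbar\ge -f_p\,d\mathcal{H}^{d-1}|_{S^\pm_\eta}$ since $g_p\le 0$) and $\{\pbar>u\}$ (where $\pbar>0$ on $\HPP$, hence $g_p=0$ and $\Delta\pbar=f_p\,d\mathcal{H}^{d-1}|_{S^\pm_\eta}\ge 0$, so $\Delta(-w)\ge\Delta(-u)\ge 0$). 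It then absorbs the surface source by adding the explicit corrector $\kappa_p\,\zeta\Phi_p$ built in \eqref{DefPhip}: since $\Delta(\zeta\Phi_p)=\tfrac{1}{\kappa_p}f_p\,d\mathcal{H}^{d-1}|_{S^\pm_\eta}+R$ with $R$ bounded, the sum $|u-\pbar|+\kappa_p\zeta\Phi_p$ satisfies $\Delta(\cdot)\ge -C\kappa_p$ in $B_1$, and the $L^\infty$ and $H^1$ bounds follow from the standard local maximum principle and Caccioppoli inequality for subsolutions with bounded right-hand side. Your route trades the use of the auxiliary $\Phi_p$ for a direct trace estimate on $S^\pm_\eta$ and a full De~Giorgi iteration; this is more self-contained but longer, whereas the paper's argument is shorter precisely because $\Phi_p$ has already been constructed for later use.
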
 

\begin{proof}
By Definition \ref{ReplacementOfp}, we have $\pbar\ge0$ along $\HPP$. Thus $\Delta u=0$ inside $\{u-\pbar>0\}$. As a result,  
$$\Delta(u-\pbar)=-\Delta\pbar\ge-fd\mathcal{H}^{d-1}|_{S^{\pm}_\eta} \text{ in $\{u-\pbar>0\}$.}$$
Similarly, inside $\{\pbar-u>0\}$, we have $\Delta\pbar\ge 0.$ Thus 
$$\Delta(u-\pbar)\le\Delta u\le 0 \text{ in $\{\pbar-u>0\}$}.$$
Combining these, we have 
$$\Delta |u-\pbar|\ge-f\HausSurf \text{ in $B_1$.}$$

Meanwhile, let $\zeta$ be a smooth non-negative function on $\Sph$ satisfying 
$$\zeta=0\text{ on $(\Sph)'$ and }\zeta=1 \text{ on $\{|x_d|\ge \frac{\eta}{2}|x'|\}$.}$$ 
Recall the auxiliary function from \eqref{DefPhip}, we have 
$$\Delta(\zeta\Phi)=\frac{1}{\kappa}f\HausSurf+R,$$where the remainder $R$ is universally  bounded in $B_1$. Consequently, we have 
$$\Delta(|u-\pbar|+\kappa\zeta\Phi)\ge-C\kappa \text{ in $B_1.$}$$

Together with Lemma \ref{NontrivialProjection}, this gives the estimates on $\|u-\pbar\|_{\mathcal{L}^\infty(B_{1/2})}$ and $\|u-\pbar\|_{H^1(B_{1/2})}$. 
\end{proof} 

\begin{rem}
Lemma \ref{PointwiseApprox} is the reason why it is preferable to work with the replacement $\pbar$ rather then the original $p$. 
\end{rem}

\subsection{Well-approximated solutions}
The heart of this paper is Lemma \ref{Dichotomy}, where we improve the  approximation of a solution $u$  by replacements of polynomials from $\PE$ or $\PO$, defined in \eqref{EvenHomPolyn} and \eqref{OddHomPolyn}.

Let $u$ be a solution to \eqref{TOP} in $B_1$, and let $p\in\mathcal{P}_m$ for $m=2k$ or $2k+1$. The distance between them is denoted by 
\begin{equation}\label{epsup}
\delta(u,p):=\max\{\|u-\overline{p}\|_{H^1(B_1)},\kappa_p\}.
\end{equation} Here we use notations from Definition \ref{ReplacementOfp} and Remark \ref{LongRemarkForReplacement}.

\begin{defi}\label{DefWellApprox}
Given $\eps>0$, we say that $u$ is $\eps$-approximated by $p\in\mathcal{P}_m$ at scale $r>0$, and write 
$$u\in\mathcal{S}_m(p,\eps,r)
$$ if 
$$\delta(u_r,p)<\eps,$$where $u_r(x)=\frac{1}{r^m}u(rx).$
\end{defi} 

We collect some immediate consequences. 

\begin{lem}\label{WeissComparison}
If $u\in\mathcal{S}_{m}(p,\eps,1),$ then $$W_{m}(u;3/4)\le W_m(\pbar;3/4)+C\eps^2
$$for a universal $C$.
\end{lem}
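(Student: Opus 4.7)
The plan is to set $w := u - \pbar$, so the hypothesis gives $\|w\|_{H^1(B_1)} < \eps$ and $\kappa_p < \eps$, and then to decompose $W_m(u;3/4) - W_m(\pbar; 3/4) = A + B$ by expanding $|\nabla u|^2 = |\nabla \pbar|^2 + 2\nabla\pbar\cdot\nabla w + |\nabla w|^2$ and $u^2 = \pbar^2 + 2\pbar w + w^2$ in the definition of $W_m$ at radius $r = 3/4$. Here $A$ collects the $w$-quadratic terms and $B$ the $w$-linear ones. The quadratic piece $A = r^{-(d-2+2m)}\int_{B_r}|\nabla w|^2 - m r^{-(d-1+2m)}\int_{\partial B_r}w^2$ is $O(\eps^2)$ directly: the first integral by hypothesis, and the trace integral by the standard trace theorem on the shell between $\partial B_r$ and $\partial B_1$.

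For the linear piece $B = 2r^{-(d-2+2m)}\int_{B_r}\nabla\pbar\cdot\nabla w - 2m r^{-(d-1+2m)}\int_{\partial B_r}\pbar w$, my next step is to integrate by parts in the bulk term, $\int\nabla\pbar\cdot\nabla w = -\int w\Delta\pbar + \int_{\partial B_r}w\partial_\nu\pbar$, and exploit the $m$-homogeneity of $\pbar$ through $\partial_\nu\pbar = (m/r)\pbar$ on $\partial B_r$, so the two boundary contributions cancel exactly. This leaves $B = -2 r^{-(d-2+2m)} \int_{B_r} w\,\Delta\pbar$. Invoking the representation $\Delta\pbar = f_p\,d\mathcal{H}^{d-1}|_{S^\pm_\eta} + g_p\,d\mathcal{H}^{d-1}|_{\HPP}$ from Remark \ref{LongRemarkForReplacement}, it suffices to show that each of $\int w f_p$ and $\int w g_p$ is $O(\eps^2)$.

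The cone term is routine: Lemma \ref{Lem22} gives $|f_p| \le C\kappa_p$, so Cauchy--Schwarz yields $|\int w f_p| \le C\kappa_p \|w\|_{L^2(S^\pm_\eta \cap B_r)} \le C\eps^2$. The hyperplane term is the main obstacle, because $g_p \le 0$ and $u \ge 0$ on $\HPP$ combine with the wrong sign for a naive estimate. My strategy is to bound it by $\|w\|_{L^\infty}\cdot|g_p|_{TV}$: on $\mathrm{supp}(g_p) \subset \{\pbar = 0\}'$ we have $w = u - 0 = u$, and a minor variant of Lemma \ref{PointwiseApprox} (with $B_{1/2}$ replaced by $B_{3/4}$, the proof being identical) provides $\|w\|_{L^\infty(B_{3/4})} \le C\eps$. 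For the total variation, I would test the spherical version $(\SphLap + \lambda(m))\pbar = f_p\,d\mathcal{H}^{d-2}|_{S^\pm_\eta} + g_p\,d\mathcal{H}^{d-2}|_{(\Sph)'}$ against the constant $1$ on $\Sph$; using $\int_\Sph \SphLap\pbar = 0$ and $\int_\Sph p = 0$ (a nontrivial spherical harmonic of positive degree is orthogonal to constants), this collapses to $\int f_p + \int g_p = \lambda(m) \int v_p$, and since $v_p \ge 0$ and $g_p \le 0$ it forces $|g_p|_{TV} = -\int g_p \le \int f_p \le C\kappa_p \le C\eps$. Combining, $|\int w g_p| \le C\eps^2$, which yields $|B| \le C\eps^2$ and completes the proof.
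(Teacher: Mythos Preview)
Your decomposition and the handling of $A$ and of the cone part of $B$ are fine, and for even $m$ the whole argument goes through (the bound $-\int_{(\Sph)'}g_p\le C\kappa_p$ is essentially what the paper obtains in the proof of Lemma~\ref{H1Forv} by testing against the harmonic polynomial $P$ with $P\equiv1$ on $(\Sph)'$). The proof, however, breaks down when $m$ is odd. The error is the assertion $\int_{\Sph}p=0$: an element $p\in\PE$ is a harmonic polynomial of positive degree and hence orthogonal to constants on $\Sph$, but $p\in\PO$ is harmonic only in $\{x_d\ne0\}$ and is \emph{not} a spherical harmonic. Already $p=-|x_d|\in\mathcal{P}_1$ has $\int_{\Sph}p<0$. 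Your identity then reads $\int f_p+\int g_p=\lambda(m)\int_{\Sph}(p+v_p)$, so $-\int g_p$ picks up the extra term $-\lambda(m)\int_{\Sph}p$, which is of order $\|p\|\sim1$. Concretely, any $p\in\PO^+$ has $\pbar=p$, $\kappa_p=0$, $f_p=0$, yet $g_p=2\,\partial_{x_d}p(\cdot,0^+)$ has total variation comparable to $\|p\|$; thus $|g_p|_{TV}\le C\kappa_p$ is simply false. Since on $\{\pbar=0\}'$ one has $w=u\ge0$ and $g_p\le0$, the hyperplane contribution $-\int wg_p$ is nonnegative and cannot be dropped, and with your bounds it is only $O(\eps)$.

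The paper circumvents this by integrating by parts in the opposite direction, placing the Laplacian on $u$ rather than on $\pbar$. Expanding $W_m(\pbar;\rho)-W_m(u;\rho)$ produces the bulk term $-2\int_{B_\rho}(\pbar-u)\Delta u$; since $\Delta u$ is a nonpositive measure supported on $\{u=0\}'\subset\HPP$, where $\pbar\ge0$, one has $u\Delta u=0$ and $\pbar\Delta u\le0$, so this term has a favorable sign for \emph{both} parities. The residual boundary integral on $\partial B_\rho$ reduces, via homogeneity of $\pbar$ (so $\rho u_\nu-mu=\rho w_\nu-mw$), to a quadratic expression in $w,w_\nu$; it is made $O(\eps^2)$ by selecting (through Fubini) a good radius $\rho\in[3/4,7/8]$ where $\int_{\partial B_\rho}(w_\nu^2+w^2)\le C\eps^2$, and monotonicity of $W_m(u;\cdot)$ together with homogeneity of $\pbar$ transfers the estimate from $\rho$ back to $3/4$.
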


\begin{proof}
With
$\|u-\pbar\|_{H^1(B_1)}\le \eps$,
 we can find $\rho\in[\frac{3}{4},\frac{7}{8}]$ such that 
$$\int_{\partial B_\rho}(u_\nu-\pbar_\nu)^2+(u-\bar p)^2d\mathcal{H}^{d-1}\le C\eps^2.$$ 

A direct computation gives 
\begin{align*}
W_m(\pbar;\rho)-W_m(u;\rho)=&\frac{1}{\rho^{d+2m-2}}\int_{B_\rho}|\nabla(\pbar-u)|^2-2\Delta u(\pbar-u)\\
+\frac{1}{\rho^{d+2m-2}}\int_{\partial B_\rho}2u_\nu&(\pbar-u)-\frac{m}{\rho^{d+2m-1}}\int_{\partial B_\rho}(\pbar-u)^2+2u(\pbar-u).
\end{align*}
With $u\Delta u=0$ and $\pbar\Delta u\le 0$, this implies 
\begin{align*}W_{m}(\pbar;\rho)-W_{m}(u;\rho)&\ge\frac{1}{\rho^{d+2m-2}}\int_{\partial B_\rho}2(\rho u_\nu-mu)(\pbar-u)-m(\pbar-u)^2\\
&\ge-C\eps^2.
\end{align*}
where we have used $\rho u_\nu-mu = \rho (u-\bar p)_\nu - m (u - \bar p)$.

With monotonicity of $W_m$ and homogeneity of $\pbar$, this implies 
$$W_{m}(u;3/4)\le W_m(\pbar;3/4)+C\eps^2.
$$
\end{proof} 

A consequence of Lemma \ref{WeissComparison} is the following relation between $W_m$ and $\eps$.
\begin{lem}\label{WeissComparison2}
If $u\in\mathcal{S}_{m}(p,\eps,1),$ then $$W_{m}(u;3/4)\le C\eps^2, \quad \quad \mbox{if $m$ is odd},$$
and
$$W_{m}(u;3/4)\le C\eps^{1+\frac{2}{d-1}}, \quad \quad \mbox{if $m$ is even,}$$
 with $C$ universal.
\end{lem}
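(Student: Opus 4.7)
The approach is to bound $W_m(\pbar;3/4)$ and then invoke Lemma \ref{WeissComparison}.

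The key observation is that since $\pbar$ is $m$-homogeneous, a polar-coordinate computation gives, for every $r>0$,
$$W_m(\pbar;r)=W_m(\pbar;1)=\frac{1}{2m+d-2}\left[\int_{\Sph}|\SphGrad\pbar|^2-\lambda(m)\pbar^2\right].$$
Indeed, writing $\pbar(r\omega)=r^m\pbar(\omega)$ and decomposing $|\nabla\pbar|^2=(\partial_r\pbar)^2+r^{-2}|\SphGrad\pbar|^2$, the integral $\int_{B_1}|\nabla\pbar|^2$ separates into a radial factor $(2m+d-2)^{-1}$ and a spherical factor $\int_{\Sph}(m^2\pbar^2+|\SphGrad\pbar|^2)$; combining with the boundary term $-m\int_{\Sph}\pbar^2$ and the identity $\lambda(m)=m(m+d-2)$ yields the claim. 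The homogeneity also makes $W_m(\pbar;r)$ scale-invariant, so evaluating at $r=3/4$ gives the same value.

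With this identity in hand, the two parity cases follow directly from the energy estimates already derived inside the proof of Lemma \ref{H1Forv}. For the odd case $m=2k+1$, the admissibility of $p$ itself in the minimization defining $\pbar$ gives the inequality \eqref{OddPbarEnergy}, which says precisely that the bracket above is non-positive. Hence $W_m(\pbar;3/4)\le 0$, and Lemma \ref{WeissComparison} yields $W_m(u;3/4)\le C\eps^2$.

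For the even case $m=2k$, the inequality \eqref{EvenPbarEnergy} bounds the bracket by $C\kappa_p^{1+\frac{2}{d-1}}$. Since $u\in\mathcal{S}_m(p,\eps,1)$ forces $\kappa_p<\eps$ through Definition \ref{DefWellApprox} and \eqref{epsup}, we conclude $W_m(\pbar;3/4)\le C\eps^{1+\frac{2}{d-1}}$. Applying Lemma \ref{WeissComparison}, and noting that $\eps^2\le \eps^{1+\frac{2}{d-1}}$ for small $\eps$ when $d\ge 3$ (since then $1+\frac{2}{d-1}\le 2$), we obtain the stated bound.

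There is no substantive obstacle: the parity-dependent asymmetry between $\eps^2$ and $\eps^{1+\frac{2}{d-1}}$ originates from the fact that $p\in\PO$ is already admissible in the minimization defining $\pbar$ (producing a non-positive energy deficit), while for $p\in\PE$ the obstacle constraint is genuinely active and only the weaker control through Lemma \ref{BoundsForKappa} is available. This content was already extracted in Lemma \ref{H1Forv}; the role of Lemma \ref{WeissComparison2} is simply to transfer it from the spherical Dirichlet energy to the Weiss energy through the homogeneity identity above.
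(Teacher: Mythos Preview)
Your proof is correct and follows essentially the same approach as the paper: reduce to bounding $W_m(\pbar;3/4)$ via Lemma \ref{WeissComparison}, then use homogeneity of $\pbar$ together with \eqref{OddPbarEnergy} in the odd case and \eqref{EvenPbarEnergy} in the even case. You give more detail on the polar-coordinate identity and on absorbing the $C\eps^2$ term into $C\eps^{1+\frac{2}{d-1}}$ when $d\ge 3$, but the argument is the same.
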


\begin{proof}
By Lemma \ref{WeissComparison} we only need to bound $W_m(\bar p:1)$. 

When $m$ is odd, $W_{2k+1}(\pbar;1)\le 0$ by \eqref{OddPbarEnergy}. 

When $m$ is even, note that 
\begin{equation}\label{EvenPbarEnergy2}
W(\pbar;1)=C\int_{\Sph}|\SphGrad\pbar|^2-\lambda(2k)\pbar^2\le C\eps^{1+\frac{2}{d-1}}
\end{equation} by \eqref{EvenPbarEnergy}. 
  \end{proof} 

\begin{rem}\label{AttractionRepulsion}
The difference between the exponents in Lemma \ref{WeissComparison2} leads to the different rates of convergence in Theorem \ref{MainResult}.
\end{rem}

The following is a version of Lemma B.2 from \cite{FRS}. It follows by quantifying the proof in \cite{FRS} and it is left to the reader:
\begin{lem}\label{PinDown}
Suppose that $u$ is a solution to \eqref{TOP} in $B_1$.

If $u\le p+\eps$ in $B_1$ for some $p\in\PO$ with $\|p\|_{\mathcal{L}^2(\Sph)}\le 1$, then 
$$u=0 \quad \text{in $B_{1-\eps^{1/2}}'\cap\{\ddd p\le -M\eps^{1/2}\}$,}$$where $M$ is a universal constant. 
\end{lem}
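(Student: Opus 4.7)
\textbf{Proof of Lemma \ref{PinDown} (proposal).}
The plan is to argue by contradiction and quantify the soft proof in \cite{FRS}. Suppose there is a point $x_0' \in B'_{1-\eps^{1/2}}$ with $\ddd p(x_0',0^+) \le -M\eps^{1/2}$ and $u(x_0',0) =: \delta > 0$. Since $u$ is continuous, $u > 0$ in a relatively open hyperplane neighborhood of $x_0'$; on this neighborhood the thin obstacle conditions force $\ddd u(\cdot,0^+) \equiv 0$, so by even reflection $u$ is actually harmonic across $\HPP$ in a Euclidean ball $B_{r_0}(x_0',0)$ for some radius $r_0 > 0$. In particular $\ddd u(x_0',0)=0$, and the Lipschitz regularity of $u$ gives the crude lower bound $r_0 \ge c\delta$ for a universal constant $c$.

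Next I would compare the Taylor expansions of $u$ and $p$ along the vertical segment $\{x_0'\} \times (0, x_d^*]$. Using $p(\cdot,0) \equiv 0$ together with the universal $C^2$-bound on $p^+ := p|_{x_d \ge 0}$ (which is harmonic with $\|p\|_{\mathcal{L}^2(\Sph)} \le 1$ and has $C^2$ norm controlled on any region bounded away from the origin; the hypothesis $\ddd p(x_0',0^+) \le -M\eps^{1/2}$ combined with $(2k+1)$-homogeneity forces $|x_0'|$ to be bounded below, so we are in that regime), I obtain
$$p(x_0', x_d) \le -M\eps^{1/2}\, x_d + C\, x_d^2.$$
On the $u$ side, harmonicity in $B_{r_0}(x_0',0)$ combined with $\ddd u(x_0',0) = 0$ and $\|u\|_\infty \le C$ yields
$$u(x_0', x_d) \ge \delta - \frac{C}{r_0^2}\, x_d^2,$$
valid for $x_d \le c r_0$. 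Inserting both estimates into $u(x_0',x_d^*) \le p(x_0',x_d^*) + \eps$ and optimizing over $x_d^*$ gives, in the regime where $r_0$ is bounded below by a universal constant, the bound
$$\delta \le -c M^2 \eps + \eps.$$
For $M \ge M_0$ a large universal constant, this forces $\delta \le 0$, contradicting $\delta > 0$.

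The main obstacle is the complementary regime in which $r_0$ is very small (equivalently, $\delta$ is very small and the hyperplane positivity set of $u$ is tightly confined around $x_0'$); there the interior $D^2u$ estimate degenerates like $r_0^{-2}$ and the direct expansion above fails to close. To handle this I would invoke a compactness argument in the spirit of \cite{FRS}: along a putative sequence of counter-examples with $M_n \to \infty$ and $\eps_n \to 0$, rescale around the offending point at the scale $r_{0,n}$ and pass to a blow-up limit. The limit is a global thin obstacle solution whose upper-half-space behavior is dominated by an element of $\PO$ with strictly negative normal derivative on a non-trivial portion of $\HPP$; the qualitative form of the lemma (the case $\eps = 0$, in which $p$ is strictly negative in a vertical neighborhood of $x_0'$ and the inequality $u \le p$ combined with $u \ge 0$ on $\HPP$ is essentially immediate) forces the limit to vanish on that portion, contradicting its non-vanishing at the marked point via the Athanasopoulos--Caffarelli--Salsa non-degeneracy. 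Tracking how $M$ enters this compactness step recovers the explicit $\eps^{1/2}$ threshold stated in the lemma.
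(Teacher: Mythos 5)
Your reduction to the case $u(x_0',0)=\delta>0$, harmonicity of $u$ across $\HPP$ in some ball $B_{r_0}(x_0',0)$, and the vertical Taylor comparison are fine when $r_0$ is bounded below universally, but the regime you defer to compactness is exactly where the lemma has content, and the compactness paragraph does not close it. Rescaling at the scale $r_{0,n}$, the hypothesis becomes $u_n(x_0+r_{0,n}y)\le -a_n r_{0,n}|y_d|+Cr_{0,n}^2|y|^2+\eps_n$ with $a_n\ge M_n\eps_n^{1/2}$; the linear (negative-slope) term survives any normalization only if $r_{0,n}\gg \eps_n^{1/2}/M_n$, and you have no such lower bound: your own estimate gives at best $r_{0,n}\gtrsim\delta_n$, and $\delta_n$ can be far smaller than $\eps_n^{1/2}/M_n$. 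In that regime the additive error $\eps_n$ swallows the barrier at scale $r_{0,n}$, so the blow-up limit is \emph{not} dominated by an element of $\PO$ with strictly negative normal derivative, the qualitative $\eps=0$ case gives nothing, and the appeal to ``ACS non-degeneracy'' at the marked point is unsupported (that point is a positivity point of $u$ on the thin space, and the lemma provides no lower bound on $u$ at all). Two further unjustified ingredients: you use $\|u\|_\infty\le C$ and a universal Lipschitz constant for $u$ (in $r_0\ge c\delta$ and in $u\ge\delta-Cr_0^{-2}x_d^2$), whereas the lemma assumes only the one-sided bound $u\le p+\eps$; and the claim that $\ddd p(x_0',0^+)\le-M\eps^{1/2}$ forces $|x_0'|$ to be bounded below only yields $|x_0'|\gtrsim(M\eps^{1/2})^{1/2k}$, which degenerates as $\eps\to0$ --- though this is harmless, since $p|_{\{x_d\ge0\}}$ extends to a harmonic polynomial with universally bounded coefficients, so $\|p\|_{C^2(B_1\cap\{x_d\ge0\})}\le C$ everywhere.

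The gap can be avoided by working at the fixed scale $\rho=\eps^{1/2}$ rather than the uncontrolled scale $r_0$, with a superharmonic barrier, so that one never needs to know where $u$ is harmonic across. Since $p\equiv0$ and $\nabla'p\equiv 0$ on $\HPP$, the universal $C^2$ bound gives $u\le p+\eps\le\eps-\tfrac a2|x_d|$ in $B_\rho(x_0',0)$ with $a:=-\ddd p(x_0',0^+)\ge M\eps^{1/2}$, once $M\ge C$. Set
$$\Psi(x)=-\tfrac a2|x_d|+\tfrac{\eps}{\rho^2}\bigl(|x'-x_0'|^2-(d-1)x_d^2\bigr)+\tfrac{d\eps}{\rho}|x_d|.$$
Then $\Delta\Psi=(-a+2d\eps/\rho)\,d\mathcal{H}^{d-1}|_{\HPP}\le0$ for $M\ge2d$; $\Psi\ge0$ on the thin ball; on $\partial B_\rho(x_0',0)$, using $|x'-x_0'|^2=\rho^2-x_d^2$, one checks $\Psi\ge\eps-\tfrac a2|x_d|\ge u$; and $\Psi(x_0',0)=0$. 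On the open set $\{u>\Psi\}\subset B_\rho(x_0',0)$ one has $u>\Psi\ge0$ on $\HPP$, so $u$ is harmonic there and $u-\Psi$ is subharmonic with nonpositive boundary values, forcing $\{u>\Psi\}=\emptyset$; hence $0\le u(x_0',0)\le\Psi(x_0',0)=0$. This uses nothing about $u$ beyond $u\le p+\eps$ and the Signorini structure, and is the kind of quantification of Lemma B.2 of \cite{FRS} that the paper (which leaves the proof to the reader) appears to intend; your segment-plus-compactness scheme, as written, does not reach it.
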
 

\begin{rem}\label{OneSidedDer}
For a function $w\in C^{1}(B_1\cap\{x_d\ge 0\})$, $\frac{\partial}{\partial x_d}w(x',0)$ denotes the one-sided derivative in the $x_d$-direction taken in $B_1\cap\{x_d\ge0\},$ that is,
$$\frac{\partial}{\partial x_d}w(x',0)=\lim_{t\to 0+}\frac{w(x',t)-w(x',0)}{t}.
$$
\end{rem}

\section{The dichotomy at a point with odd frequency}
Suppose that $u$ is a solution to the thin obstacle problem \eqref{TOP}, and that $0$ is a point with integer frequency. By  results in \cite{GP,FRS}, up to an initial scaling, the solution $u$ is well-approximated in $B_1$ by some homogeneous solution  from either $\PO^+$ or $\PE^+$ as in \eqref{OddHomSolution} and \eqref{EvenHomSolution}.  To get a rate of convergence as in Theorem \ref{MainResult}, we need to improve this approximation at smaller scales. 

This is achieved through the dichotomy as in Lemma \ref{Dichotomy}, which states that at a smaller scale, either the approximation can be improved in a quantified fashion, or the Weiss energy  drops in a quantified fashion. In some sense, this method combines the strengths of the epiperimetric inequality approach  as in \cite{CSV} and the approach by linearization  as in \cite{D}.

In this section and the next, we prove this dichotomy for points with odd and even frequencies, respectively. In the final section of this paper, we show how to deduce the main result from them.  

We state the main lemma for this section:

\begin{lem}[Dichotomy at a point with odd frequency]\label{DichotomyOdd}
Given $k\in\N$, there are universal constants, $\tilde{\eps}$, $r_0$, $c$ small and $C$ big, such that 

If $u\in\mathcal{S}_{2k+1}(p,\eps,1)$ with $\eps<\tilde{\eps}$ and $1\le \|p\|_{\mathcal{L}^2(\Sph)}\le 2$, then we have the following dichotomy: 

a) Either $$W_{2k+1}(u;1)-W_{2k+1}(u;r_0)\ge c\eps^2,$$ and 
$$u\in\mathcal{S}_{2k+1}(p,C\eps,r_0);$$

b) or $$u\in\mathcal{S}_{2k+1}(p',\frac 12\eps,r_0)$$ for some $p'$ with $$\|p'-p\|_{\mathcal{L}^2(\Sph)}\le C\eps.$$
\end{lem}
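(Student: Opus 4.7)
The plan is a contradiction argument via compactness and linearization, in the spirit of De~Silva \cite{D}. Suppose the lemma fails; then for any candidate constants $\tilde\eps, r_0, c, C$ there exist sequences $u_n\in\mathcal{S}_{2k+1}(p_n,\eps_n,1)$ with $\eps_n\to 0$ and $1\le\|p_n\|_{\mathcal{L}^2(\Sph)}\le 2$, for which $W_{2k+1}(u_n;1)-W_{2k+1}(u_n;r_0)<c\eps_n^2$ and simultaneously $u_n\notin\mathcal{S}_{2k+1}(p',\tfrac12\eps_n,r_0)$ for every $p'$ with $\|p'-p_n\|_{\mathcal{L}^2(\Sph)}\le C\eps_n$. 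Normalize the error by $w_n:=(u_n-\overline{p_n})/\eps_n$. Definition~\ref{DefWellApprox} gives $\|w_n\|_{H^1(B_1)}\le 1$, Lemma~\ref{PointwiseApprox} gives $\|w_n\|_{\mathcal{L}^\infty(B_{1/2})}\le C$, and since $\kappa_{p_n}\le\eps_n\to 0$, Lemma~\ref{H1Forv} forces $\overline{p_n}\to p_\infty$ in $H^1$ with $p_\infty\in\PO^+$. Along a subsequence, $p_n\to p_\infty$ in every norm, $\kappa_{p_n}/\eps_n\to\gamma\in[0,1]$, and $w_n\rightharpoonup w_\infty$ weakly in $H^1(B_{3/4})$, strongly in $L^2$.

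The heart of the argument is to identify the linear problem satisfied by $w_\infty$. Away from $\HPP$, $\Delta u_n=0$ while $\Delta\overline{p_n}=f_{p_n}d\mathcal{H}^{d-1}|_{S^\pm_\eta}$ has mass of order $\kappa_{p_n}=O(\eps_n)$; subtracting the corrector $(\kappa_{p_n}/\eps_n)\Phi_{p_n}$ from \eqref{DefPhip} absorbs this singular contribution, so $w_\infty-\gamma\Phi_{p_\infty}$ is harmonic in $B_{3/4}^\pm$. On $\HPP$, Lemma~\ref{PinDown}---combined with the bound $\|u_n-p_n\|_{\mathcal{L}^\infty(B_1)}=o(1)$ coming from Lemmas~\ref{PointwiseApprox} and~\ref{H1Forv}---forces the contact set $\Lambda(u_n)$ to exhaust $\{\ddd p_n\le -M\eps_n^{1/2}\}\cap B'_{1-\eps_n^{1/2}}$, which converges to the open set $\{\ddd p_\infty<0\}\cap\HPP$. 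Since $p_n\equiv 0$ along $\HPP$, this yields $w_\infty=0$ there in the limit. On the complement $\{\ddd p_\infty=0\}\cap\HPP$ (recall $\ddd p_\infty\le 0$ on $\HPP$ for $p_\infty\in\PO^+$), the positivity-set condition $\partial^+_{x_d}u_n=0$ passes to $\partial^+_{x_d}w_\infty=0$. By even reflection, $w_\infty$ solves a linearized Signorini problem in $B_{3/4}$.

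Now exploit the failed Weiss drop. Since $\overline{p_n}$ is exactly $(2k+1)$-homogeneous, $\partial_\nu(u_n)_r-(2k+1)(u_n)_r=\eps_n[\partial_\nu(w_n)_r-(2k+1)(w_n)_r]$, and integrating \eqref{DerOfWeiss} from $r_0$ to $1$ gives
\[
\int_{r_0}^1\frac{2}{r}\int_{\partial B_1}\bigl(\partial_\nu(w_n)_r-(2k+1)(w_n)_r\bigr)^2\,dr=\frac{W_{2k+1}(u_n;1)-W_{2k+1}(u_n;r_0)}{\eps_n^2}<c.
\]
Hence $w_\infty$ is $(2k+1)$-homogeneous modulo an $L^2$-error of size $C\sqrt{c}$. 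Decompose $w_\infty$ along homogeneous modes on $\Sph$: the $(2k+1)$-homogeneous piece coincides with $\overline{q}$ for some $q\in\PO$ by the classification of $(2k+1)$-homogeneous solutions of the linearized Signorini problem (those are exactly the admissible infinitesimal deformations of $p_\infty$ inside $\PO^+$); no mode below $2k+1$ contributes, because the Almgren frequency at $0$ is pinned at $2k+1$; the remaining higher modes carry $H^1$-norm at most $C\sqrt{c}$.

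Set $p':=p_n+\eps_n q$, so $\|p'-p_n\|_{\mathcal{L}^2(\Sph)}\le C\eps_n$. Under rescaling from $B_{r_0}$ to $B_1$, the higher-mode residual of $w_n-\overline{q}$ contracts by a factor $r_0^{\alpha}$ for a universal $\alpha>0$ (the spectral gap above the homogeneity $2k+1$ in the relevant Signorini spectrum), while the $\sqrt{c}$-error is amplified by the Sobolev rescaling factor $r_0^{-(2k+1+d/2)}$, giving
\[
\delta(u_{n,r_0},p')\le\bigl(Cr_0^{\alpha}+C\sqrt{c}\,r_0^{-(2k+1+d/2)}\bigr)\eps_n+o(\eps_n),
\]
where the $o(\eps_n)$ absorbs the strong $L^2$ convergence $w_n\to w_\infty$ and the nonlinearity of the replacement map $p\mapsto\overline{p}$ at $p_\infty$. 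Choosing first $r_0$ small so that $Cr_0^{\alpha}<1/4$, then $c$ small (depending on $r_0$) with $C\sqrt{c}\,r_0^{-(2k+1+d/2)}<1/4$, one gets $\delta(u_{n,r_0},p')<\eps_n/2$ for large $n$, contradicting the failure of (b). The main obstacle will be the rigorous derivation of the linearized boundary conditions along the moving contact sets $\Lambda(u_n)$ and the one-sided trace on $\HPP$: the quantitative pinning of Lemma~\ref{PinDown} together with the corrector $\Phi_{p_n}$ absorbing the $S^\pm_\eta$ defect of $\overline{p_n}$ are the crucial tools for this step.
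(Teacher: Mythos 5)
Your overall architecture (contradiction, compactness of the normalized error $(u_n-\overline{p_n})/\eps_n$, the corrector $\Phi_{p_n}$ to absorb the $S^\pm_\eta$ defect, Lemma \ref{PinDown} to force vanishing on $\{\ddd p_n\lesssim -\eps_n^{1/2}\}$, then $p'=p_n+\eps_n q$) is the same as the paper's, but two steps have genuine gaps. First, you never control the $\kappa$-half of the error: by Definition \ref{DefWellApprox}, alternative b) requires $\delta((u_n)_{r_0},p'_n)<\tfrac12\eps_n$, hence in particular $\kappa_{p'_n}<\tfrac12\eps_n$; a priori the hypothesis only gives $\kappa_{p_n}\le\eps_n$, so $\kappa_{p'_n}$ could be of order $\eps_n$ and your final display (which only estimates the $H^1$ distance to $\overline{p'}$) does not rule this out. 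In the paper this is exactly where the $|x|^{2k+1}\log|x|$ term in $\Phi_p$ is used: near-homogeneity between scales (coming from the failed Weiss decay via \eqref{ChangeInRadial}) is incompatible with the logarithmic term unless $\kappa_n/\eps_n\le Cr+o(1)$ (second estimate of Lemma \ref{InitialImprovement}); your scheme has no substitute for this mechanism.

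Second, your identification of the limit problem and of the new polynomial is not justified. Since $p_\infty\in\PO^+$ is nontrivial, $\ddd p_\infty(\cdot,0)$ is a nonzero $2k$-homogeneous polynomial, so $\{\ddd p_\infty=0\}'$ has dimension at most $d-2$ and zero capacity; consequently the limit $h$ is simply harmonic in $B_1^\pm$ with $h=0$ on all of $B_1'$ --- there is no residual Signorini structure, and your claim that $\partial^+_{x_d}u_n=0$ passes to the limit on that set is both unproven (it only holds off the contact set) and unnecessary. Relatedly, the invoked ``classification of $(2k+1)$-homogeneous solutions of the linearized Signorini problem as admissible infinitesimal deformations of $p_\infty$ inside $\PO^+$'' is nowhere established and is not needed: the whole point of the replacement construction is that $p'$ only has to lie in the unconstrained class $\PO$, which the $(2k+1)$-homogeneous Taylor mode of the Dirichlet-harmonic limit automatically does (one still must show $\overline{p_n+\eps_n q}-\overline{p_n}-\eps_n q=\eps_n o(1)$, as in Lemma \ref{ReplacementOdd}). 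Finally, ``no mode below $2k+1$ contributes because the Almgren frequency at $0$ is pinned at $2k+1$'' is wrong as a justification: the lemma does not assume $0\in\Lambda_{2k+1}(u_n)$, and even at such points the limit of the normalized differences carries no frequency pinning; the low Taylor modes $h_0,\dots,h_{2k}$ must be killed (or at least made small and then tracked through the rescaling, where they are \emph{amplified} by negative powers of $r_0$, interacting with your order of choosing $r_0$ before $c$) using the almost-homogeneity furnished by the failed Weiss drop, which is how Lemma \ref{InitialImprovement} proceeds.
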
 
Recall the space of well-approximated solutions $\mathcal{S}_{2k+1}$ from Definition \ref{DefWellApprox}, and the Weiss energy from \eqref{Weiss}.

The remaining part of this section is devoted to the proof of Lemma \ref{DichotomyOdd}. We argue by contradiction. 

Suppose, on the contrary, the lemma is not true. Then we find a sequence $(u_n,p_n)$ satisfying 
$$ 1\le\|p_n\|_{\mathcal{L}^2(\Sph)}\le 2, \text{ and } u_n\in\mathcal{S}_{2k+1}(p_n,\eps_n,1) \text{ with $\eps_n\to 0.$}$$
However, neither  a) nor b) holds, that is
\begin{equation}\label{FailingWeiss1}
W_{2k+1}(u_n;1)-W_{2k+1}(u_n;r_0)<\frac{1}{n^2}\eps_n^2,
\end{equation} 
and 
\begin{equation}\label{FailingImprovement}
u_n\notin\mathcal{S}_{2k+1}(p',\frac12\eps_n,r_0) \quad  \forall p'\in\PO \text{ with }\|p'-p_n\|_{\mathcal{L}^2(\Sph)}\le C\eps_n.
\end{equation} The constants $r_0$ and $C_0$ will be chosen depending on universal constants. 

We will choose $r_0\le 1/2$. Hence by monotonicity of the Weiss energy and \eqref{FailingWeiss1}, we have 
\begin{equation}\label{FailingWeiss}
W_{2k+1}(u_n;1)-W_{2k+1}(u_n;1/2)<\frac{1}{n^2}\eps_n^2.
\end{equation}

Corresponding to this sequence $p_n$, we have auxiliary functions $f_n$, $g_n$, $v_n$, $\varphi_n$, $H_n$ and $\Phi_n$, and constants $\kappa_n$ as in Definition \ref{ReplacementOfp}, Remark \ref{LongRemarkForReplacement}, \eqref{Defphip} and \eqref{DefPhip}.

Now with $p_n$ uniformly  bounded in the finite dimensional space $\PO$, we have, up to a subsequence, 
$$p_n\to p_\infty \text{ uniformly in $C^1(B_1\cap\{x_d\ge0\})$ and $C^1(B_1\cap\{x_d\le0\})$.}$$
As a result, we have $p_\infty\in\PO$. Actually, it is in the more restrictive space $\PO^+$ (see \eqref{OddHomSolution}):

\begin{lem}
$\frac{\partial}{\partial x_d} p_\infty\le 0$ in $B_1'$.
\end{lem}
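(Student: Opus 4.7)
The plan rests on the following distributional identity, valid for any $w$ which is even in $x_d$, harmonic in $\{x_d\ne 0\}\cap B_1$, and $C^1$-regular up to $\HPP$ on each side:
\begin{equation*}
\Delta w \;=\; 2\,\ddd w(\cdot,0)\,\mathcal{H}^{d-1}|_{\HPP} \quad\text{in } B_1,
\end{equation*}
where $\ddd$ denotes the one-sided derivative from above as in Remark \ref{OneSidedDer}. One verifies it by applying Green's second identity on $B_1\cap\{\pm x_d>0\}$ and observing that evenness forces $\ddd w(\cdot,0^-)=-\ddd w(\cdot,0^+)$, so the two boundary contributions on $\HPP$ add constructively while the $w\,\partial_d\varphi$ terms cancel. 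Applied to $w=u_n$, whose $C^{1,1/2}$-regularity up to $\HPP$ on each side is the optimal regularity theorem of \cite{AC}, the thin obstacle condition $\Delta u_n\le 0$ immediately yields $\ddd u_n(x',0)\le 0$ pointwise on $B_1'$. Applied to $w=p_\infty\in\PO$, which is a polynomial smooth on each closed half, the same identity reduces the lemma to showing $\Delta p_\infty\le 0$ as a distribution in $B_1$.

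The second step is to pass the inequality $\Delta u_n\le 0$ to the limit. I would establish $u_n\to p_\infty$ in $\mathcal{L}^2(B_1)$ from three pieces: Lemma \ref{PointwiseApprox} together with $u_n\in\mathcal{S}_{2k+1}(p_n,\eps_n,1)$ gives $\|u_n-\pbar_n\|_{\mathcal{L}^\infty(B_{1/2})}\le C\eps_n$; Lemma \ref{H1Forv} combined with $\kappa_n<\eps_n$ gives $\|\pbar_n-p_n\|_{H^1(B_1)}\le C\eps_n^{1/2}$; and the $C^1$-convergence $p_n\to p_\infty$ on each closed half is already established in the excerpt. Hence $\Delta u_n\to\Delta p_\infty$ as distributions in $B_1$, and testing against a non-negative $\varphi\in C_c^\infty(B_1)$ gives $\langle \Delta p_\infty,\varphi\rangle=\lim_n\langle \Delta u_n,\varphi\rangle\le 0$, i.e.\ $\Delta p_\infty\le 0$ in $B_1$. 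Combining with the boundary identity applied to $p_\infty$ (recalling $p_\infty$ is $C^\infty$ on each closed half) gives
\begin{equation*}
0 \;\ge\; \langle \Delta p_\infty,\varphi\rangle \;=\; 2\int_{\HPP}\varphi\,\ddd p_\infty(x',0)\,dx'
\end{equation*}
for every non-negative test function, so $\ddd p_\infty(x',0)\le 0$ on $B_1'$, as claimed.

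The proof is essentially a clean limit passage and I do not foresee any real obstacle. The only ingredient that requires care is the derivation of the boundary identity $\Delta w = 2\,\ddd w(\cdot,0)\,\mathcal{H}^{d-1}|_{\HPP}$; its validity needs $w$ to be $C^1$ up to $\HPP$ on each side, which is supplied by the Athanasopoulos--Caffarelli regularity for $u_n$ and by polynomial smoothness for $p_\infty\in\PO$.
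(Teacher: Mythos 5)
Your argument is correct, and it reaches the conclusion by a slightly different mechanism than the paper. The paper's proof is a pointwise contradiction: it assembles exactly the same key estimate you do, namely $\|u_n-p_n\|_{H^1(B_1)}\le \|u_n-\pbar_n\|_{H^1(B_1)}+\|v_n\|_{H^1(B_1)}\le C\eps_n^{1/2}$ (Definition \ref{DefWellApprox} plus Lemma \ref{H1Forv}), assumes $\ddd p_\infty(x_0)=\beta>0$ at some $x_0\in B_1'$, and uses $\ddd u_n(\cdot,0)\le 0$ together with the $C^1$ convergence $p_n\to p_\infty$ to force $\ddd(p_n-u_n)\ge\frac12\beta$ in a fixed neighborhood, contradicting the vanishing $H^1$ bound; making the ``neighborhood'' statement rigorous implicitly uses uniform-in-$n$ regularity of $\nabla u_n$ up to $\HPP$ (the $C^{1,1/2}$ estimate of \cite{AC}). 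You instead work distributionally: pass $\Delta u_n\le 0$ to the limit through $L^1_{loc}$ convergence and then read off the sign of $\ddd p_\infty(\cdot,0)$ from the jump formula $\Delta w=2\,\ddd w(\cdot,0^+)\,\mathcal{H}^{d-1}|_{\HPP}$ for even functions harmonic off $\HPP$. This buys you a proof that needs no quantitative boundary regularity for $u_n$ beyond what makes the jump formula valid for the \emph{limit} $p_\infty$ (smooth up to $\HPP$ on each side by reflection, since $p_\infty\in\PO$ vanishes on $\HPP$), at the cost of being a bit longer. Two small streamlinings: the first application of your identity, to $u_n$, is not needed (only $\Delta u_n\le 0$ enters the limit passage); and the closeness $\|u_n-\pbar_n\|_{H^1(B_1)}<\eps_n$ is immediate from Definition \ref{DefWellApprox}, so Lemma \ref{PointwiseApprox} can be dispensed with --- in particular, even if you only conclude $\Delta p_\infty\le 0$ in $B_{1/2}$, the homogeneity of $p_\infty$ extends the sign of $\ddd p_\infty$ to all of $B_1'$.
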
 

\begin{proof}
By Lemma \ref{H1Forv}, we have 
\begin{equation}\label{33equation}\|u_n-p_n\|_{H^1(B_1)}\le \|u_n-\overline{p_n}\|_{H^1(B_1)}+\|v_n\|_{H^1(B_1)}\le C\eps_n^{1/2}.
\end{equation}

Suppose $\frac{\partial}{\partial x_d}p_\infty(x',0)=\beta>0$ at some $(x',0)\in B_1'.$ 

Since $\frac{\partial}{\partial x_d}u_n(\cdot,0)\le 0$, we have 
  $\frac{\partial}{\partial x_d}(p_n-u_n)\ge\frac{1}{2}\beta$ in a neighborhood of $(x',0)$ for large $n$. This contradicts \eqref{33equation} eventually. 
\end{proof} 

Since $p_\infty\in\PO$, the following set
\begin{equation*}
\mathcal{N}:=\{\frac{\partial}{\partial x_d} p_\infty=0\}'
\end{equation*}    is of dimension at most $(d-2)$.

\begin{lem}\label{EventualZero}
Given a compact set $K\subset B_1'\backslash\mathcal{N}$,  we can find $N\in\N$ such that $$u_n=0 \text{ on $K$}$$ for all $n\ge N.$
\end{lem}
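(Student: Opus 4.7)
The plan is to apply Lemma~\ref{PinDown} to each $u_n$ with $p_\infty$ as the comparison polynomial. The key observation is that on $K \subset B_1' \setminus \mathcal{N}$, compactness yields a uniform strict upper bound $\ddd p_\infty \leq -\delta < 0$, so once $u_n$ is close enough to $p_\infty$ in $\mathcal{L}^\infty$, Lemma~\ref{PinDown} will force $u_n \equiv 0$ on $K$.

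The first step is to upgrade the $H^1(B_1)$ convergence $u_n \to p_\infty$, which follows from \eqref{33equation} together with the smooth convergence $p_n \to p_\infty$ in the finite-dimensional space $\PO$, to locally uniform convergence inside $B_1$. The required compactness comes from the optimal $C^{1,1/2}$ regularity of \cite{AC} up to $\HPP$ from each side, combined with the uniform bound $\|u_n\|_{\mathcal{L}^2(B_1)} \leq C$. Arzel\`a-Ascoli together with the identification of the limit via $H^1$ convergence then give $u_n \to p_\infty$ uniformly on every $B_r \subset\subset B_1$.

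Next I would rescale in order to invoke Lemma~\ref{PinDown} on the unit ball. Pick $R \in (\sup_{x \in K}|x|, 1)$ and set $\tilde u_n(x) := R^{-(2k+1)}u_n(Rx)$; by $(2k+1)$-homogeneity of $p_\infty$, the comparison polynomial is invariant under this rescaling. For any prescribed $\sigma > 0$ and all $n$ large, uniform convergence on $B_R$ gives $\tilde u_n \leq p_\infty + \sigma$ on $B_1$. After dividing by a harmless constant so that $\|p_\infty\|_{\mathcal{L}^2(\Sph)} \leq 1$, Lemma~\ref{PinDown} yields $\tilde u_n \equiv 0$ on $B_{1-\sigma^{1/2}}' \cap \{\ddd p_\infty \leq -M'\sigma^{1/2}\}$. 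Scaling back, the $2k$-homogeneity of $\ddd p_\infty$ transfers this to $u_n \equiv 0$ on a corresponding subset of $B_R'$.

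To conclude, by compactness and the definition of $\mathcal{N}$ there exists $\delta > 0$ with $\ddd p_\infty \leq -\delta$ on $K$, while $K \subset B_R'$ by the choice of $R$. Choosing $\sigma$ small enough that the pin-down set contains $K$ and then $N$ large enough for the $\mathcal{L}^\infty$ approximation to hold on $B_R$, we obtain $u_n \equiv 0$ on $K$ for all $n \geq N$. The only nontrivial ingredient is the $H^1 \to \mathcal{L}^\infty$ upgrade, for which we lean on the optimal $C^{1,1/2}$ regularity; everything else is scaling bookkeeping and a direct use of Lemma~\ref{PinDown}.
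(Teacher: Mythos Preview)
Your proof is correct and follows the same overall plan as the paper: extract a uniform negative bound for $\ddd p_\infty$ on $K$ by compactness, upgrade to an $\mathcal{L}^\infty$ bound for $u_n$, and invoke Lemma~\ref{PinDown}. The only substantive difference is in how the $\mathcal{L}^\infty$ control is obtained. The paper uses its own Lemma~\ref{PointwiseApprox} together with \eqref{33equation} to get directly the quantitative bound $u_n\le p_n+C_K\eps_n^{1/2}$ in a neighborhood of $K$, and then applies Lemma~\ref{PinDown} with $p_n$. You instead pass through the $C^{1,1/2}$ estimates of \cite{AC} and Arzel\`a--Ascoli to obtain locally uniform convergence $u_n\to p_\infty$, and apply Lemma~\ref{PinDown} with $p_\infty$ after a rescaling. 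The paper's route is shorter and self-contained (it avoids invoking the full optimal regularity and the extra rescaling bookkeeping), while your route makes the role of compactness more transparent. Either way the argument is the same in spirit; just note that Lemma~\ref{PointwiseApprox} already gives you the $H^1\to\mathcal{L}^\infty$ upgrade, so the detour through $C^{1,1/2}$ regularity is unnecessary here.
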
 

\begin{proof}
By compactness of $K$, we can find $\beta>0$ such that $$\ddd p_\infty\le-\beta \text{ on $K$.}$$

With Lemma \ref{PointwiseApprox} and \eqref{33equation}, we have $$u_n\le p_n+C_K\eps_n^{1/2} \text{ in a neighborhood of $K$.}$$ Together with Lemma \ref{PinDown}, this gives the desired result. 
\end{proof} 

Now define the \textit{normalized solutions}
\begin{equation}\label{NormalizedSolution}
\hu_n=\frac{u_n-\pbar_n}{\eps_n}.
\end{equation} With Lemma \ref{PointwiseApprox} and \eqref{DefPhip}, we have that $$\Delta (\hu_n+\frac{\kappa_n}{\eps_n}\Phi_n)=0 \text{ in $B_1^+$}$$ and 
$$  \|\hu_n+\frac{\kappa_n}{\eps_n}\Phi_n\|_{H^1(B_\rho)}\le C(\rho) \text{ for any $\rho<1$.}$$
Thus, up to a subsequence, $\hu_n+\frac{\kappa_n}{\eps_n}\Phi_n$ converges in $L^2_{loc}(B_1)$ to a limit function $h \in H^1_{loc}(B_1),$ which is harmonic in $B_1^+$ and $B_1^-$, and
\begin{equation}\label{VanishingH1}
\|\hu_n+\frac{\kappa_n}{\eps_n}\Phi_n-h\|_{\mathcal{L}^2(B_{7/8})}=o(1) \text{ as $n\to\infty$}.
\end{equation}
Moreover, Lemma \ref{EventualZero} implies that $\hu_n+\frac{\kappa_n}{\eps_n}\Phi_n$ vanishes eventually on any compact subsets of $B_1'\backslash\mathcal{N},$ where $\mathcal{N}$ is a subset of $\HPP$ of dimension at most $(d-2)$.
Consequently, $\hu_n+\frac{\kappa_n}{\eps_n}\Phi_n$ convergences uniformly to $h$ on compact sets in $B_1 \setminus \mathcal N$, which 
 implies that $h=0$ on $B_1'\backslash\mathcal{N}.$
 With $\mathcal{N}$ having $0$ capacity, this gives
 $$\Delta h=0 \text{ in $B_1^+$, and } h=0 \text{ on $B_1'$.}$$ 

Denote the $(2k+1)$-order Taylor expansion of $h$ at the origin (in $B_1^+$, then evenly reflected to $B_1^-$) by $\sum_{\ell=0}^{2k+1}h_\ell,$ with each $h_\ell$ being the $\ell$-homogeneous part.  Then we have

\begin{lem}\label{InitialImprovement}
There is a universal constant $C$, such that for $r\in(0,1/4)$ we have 
$$\|(\hu_n)_r-h_{2k+1}\|_{\mathcal{L}^2(B_2)}\le Cr(1+|\log r|)+o(1)$$ and 
$$\frac{\kappa_n}{\eps_n}\le Cr+o(1)
 \text{ as $n\to\infty,$}$$where  $(\hu_n)_r(x)=\frac{1}{r^{2k+1}}\hu_n(rx).$
\end{lem}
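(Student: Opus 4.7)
The strategy is to reduce both estimates to showing (i) $h = h_{2k+1}$, and (ii) $\mu_n := \kappa_n/\eps_n \to 0$ as $n\to\infty$. Given (i)--(ii), write $(\hu_n)_r = (\hu_n + \mu_n\Phi_n)_r - \mu_n(\Phi_n)_r$: the first summand converges to $h_r = h_{2k+1}$ in $\mathcal{L}^2(B_2)$ by rescaling \eqref{VanishingH1} (valid for $r < 7/16$, which covers $r \in (0,1/4)$), while a direct calculation from \eqref{DefPhip} gives the scaling identity
\begin{equation*}
(\Phi_n)_r(x) = \Phi_n(x) + \tfrac{\log r}{d+4k}\,\varphi_n(x/|x|)|x|^{2k+1},
\end{equation*}
so that $\|(\Phi_n)_r\|_{\mathcal{L}^2(B_2)} \le C(1+|\log r|)$ by Lemma \ref{NontrivialProjection}. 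The first estimate then follows from the bound $o_n(1) + C\mu_n(1+|\log r|)$; the second estimate is equivalent to $\mu_n \to 0$ since $r \in (0,1/4)$ is arbitrary.

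To derive a single identity that produces both (i) and (ii), I apply \eqref{ChangeInRadial} together with \eqref{FailingWeiss} at two scales $s=1/2 < r=3/4$ inside the domain of definition $B_{7/8}$ of $h$; since $\bar p_n$ is $(2k+1)$-homogeneous, this yields
\begin{equation*}
\int_{\partial B_1} |(\hu_n)_{3/4} - (\hu_n)_{1/2}|\,d\mathcal{H}^{d-1} \le \frac{\sqrt{\log(3/2)}}{n} \longrightarrow 0.
\end{equation*}
Interior elliptic regularity for the harmonic functions $\hu_n + \mu_n\Phi_n$ in $B_1^\pm$ upgrades the $\mathcal{L}^2$-convergence in \eqref{VanishingH1} to $C^\infty_{loc}(B_1^\pm)$-convergence. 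Passing to a subsequence so that $\mu_n \to \mu_\infty \in [0,1]$ and $\varphi_n \to \varphi_\infty$ in the finite-dimensional space $\PO$ (with $\|\varphi_\infty\|_{\mathcal{L}^2(\Sph)} \ge c > 0$ by Lemma \ref{NontrivialProjection}, unless $\kappa_n = 0$ eventually in which case $\mu_n = 0$ and (ii) is immediate), combining the displayed limit with the scaling of $\Phi_n$ yields, on $\partial B_1^+$,
\begin{equation*}
h_{3/4} - h_{1/2} = \mu_\infty\,\tfrac{\log(3/2)}{d+4k}\,\varphi_\infty.
\end{equation*}

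I then expand $h|_{B_1^+}$ in its Taylor series at the origin (convergent near $0$ via the harmonic Schwarz extension of $h|_{B_1^+}$ across $B_1'$): $h|_{B_1^+} = \sum_{\ell \ge 1} h_\ell|_{B_1^+}$ with each $h_\ell$ an $\ell$-homogeneous harmonic polynomial vanishing on $\HPP$. Each $h_\ell|_{\partial B_1^+}$ is a Dirichlet eigenfunction of the hemispherical Laplacian with eigenvalue $\lambda(\ell) = \ell(\ell+d-2)$, and $\varphi_\infty|_{\partial B_1^+}$ lies in the $\lambda(2k+1)$-eigenspace. Since these eigenspaces are pairwise $\mathcal{L}^2$-orthogonal, projecting the displayed identity onto the $\lambda(\ell)$-eigenspace for $\ell \ne 2k+1$ forces $((3/4)^{\ell-(2k+1)} - (1/2)^{\ell-(2k+1)})\, h_\ell = 0$, hence $h_\ell \equiv 0$, proving (i); projecting onto the $\lambda(2k+1)$-eigenspace forces $\mu_\infty \varphi_\infty = 0$, hence $\mu_\infty = 0$, proving (ii).

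The main technical subtlety is upgrading the $\mathcal{L}^1(\partial B_1)$ control to the pointwise identity on $\partial B_1^+$: the uniform $L^\infty$-bound on $\hu_n + \mu_n\Phi_n$ away from $\HPP$ (from harmonicity and \eqref{VanishingH1}) boosts the $\mathcal{L}^1$-convergence to uniform convergence on $\partial B_1^+ \cap \{|x_d| \ge \delta\}$ for each $\delta > 0$, and real-analyticity of both sides then extends the identity to the full hemisphere.
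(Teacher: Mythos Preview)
Your argument is correct and follows a route close to the paper's, with one substantive difference worth noting.

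Both proofs rest on the same mechanism: use \eqref{ChangeInRadial} together with \eqref{FailingWeiss} to compare $\hu_n$ at two scales, then exploit the scaling law $(\Phi_n)_\rho = \Phi_n + \tfrac{\log\rho}{d+4k}\varphi_n|x|^{2k+1}$ and the $L^2$-orthogonality of homogeneous harmonics to separate the contributions of $\varphi_n$ and of the $h_\ell$. The paper compares $\hu_n$ and $(\hu_n)_{1/2}$ inside the small ball $B_{2r}$, keeps the finite Taylor expansion $\sum_{\ell\le 2k+1} h_\ell$ with remainder $O(r^{2k+2})$, and reads off directly the $r$-dependent bounds of the statement. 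You instead compare at the fixed scales $3/4$ and $1/2$ on $\partial B_1$, pass to the limit to obtain the exact identity
\[
h_{3/4}-h_{1/2}=\mu_\infty\,\tfrac{\log(3/2)}{d+4k}\,\varphi_\infty \quad\text{on }(\partial B_1)^+,
\]
and then use the full eigenfunction expansion of $h$ (legitimate since the odd extension of $h$ is harmonic in $B_1$, so its Taylor series converges on $\overline{B_{3/4}}$) to conclude $h_\ell=0$ for every $\ell\ne 2k+1$ and $\mu_\infty=0$. Your conclusion is therefore slightly stronger—$h\equiv h_{2k+1}$ and $\kappa_n/\eps_n\to 0$ outright—so both estimates of the lemma reduce to a bare $o(1)$ and the $Cr(1+|\log r|)$ term is in fact unnecessary. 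The paper's version has the small advantage of avoiding the passage from $L^1(\partial B_1)$ control to a pointwise identity via locally uniform convergence, but your handling of that step (uniform convergence on $\{|x_d|\ge\delta\}$ from interior harmonic estimates, then continuity in $\delta$) is clean and correct.
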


\begin{proof}
Throughout this proof, for a function $w$, we use $w_r$ to denote its rescaling $$w_r(x)=\frac{1}{r^{2k+1}}w(rx).$$

Firstly, with \eqref{ChangeInRadial} and \eqref{FailingWeiss}, we have
\begin{equation*}
\int_{\partial B_1}|u-u_{\frac{1}{2}}|\le \eps o(1),
\end{equation*} which implies, by maximum principle and the homogeneity of $\pbar$, that 
\begin{equation}\label{35equation}
|\hu-\hu_{\frac 12}|=o(1) \text{ in $B_{7/8}.$}
\end{equation} 

With \eqref{VanishingH1} and regularity of the harmonic function $h$, we have 
\begin{equation}\label{35equation1}
\|\hu+\frac{\kappa}{\eps}\Phi-\sum_{\ell=0}^{2k+1}h_\ell\|_{\mathcal{L}^2(B_{2r})}\le Cr^{2k+2+\frac{d}{2}}+o(1).
\end{equation}
A rescaling gives 
$$\|\hu_{\frac 12}+\frac{\kappa}{\eps}\Phi_{\frac{1}{2}}-\sum_{\ell=0}^{2k+1}(h_\ell)_{\frac 12}\|_{\mathcal{L}^2(B_{4r})}\le Cr^{2k+2+\frac{d}{2}}+o(1).
$$
Combining these with \eqref{35equation}, we get 
$$\|\frac{\kappa}{\eps}(\Phi-\Phi_{\frac{1}{2}})+\sum[(h_\ell)_{\frac{1}{2}}-h_\ell]\|_{\mathcal{L}^2(B_{2r})}\le Cr^{2k+2+\frac{d}{2}}+o(1).
$$
That is, 
$$
\frac{\kappa}{\eps}\frac{\log(2)}{d+4k}\|\varphi|x|^{2k+1}\|_{\mathcal{L}^2(B_{2r})}+\sum_{\ell=0}^{2k}(2^{2k+1-\ell}-1)\|h_\ell\|_{\mathcal{L}^2(B_{2r})}\le Cr^{2k+2+\frac{d}{2}}+o(1),
$$where we used the definition of $\Phi$ from \eqref{DefPhip}, and the orthogonality of $\varphi$ and $h_\ell$ in $\mathcal{L}^2(\Sph)$ for $\ell\le 2k.$

With Lemma \ref{NontrivialProjection}, we can use the bound on the first term to get 
$$\frac{\kappa}{\eps}\le Cr+o(1).
$$
Similarly, the bound on each of the remaining terms gives 
$$\|h_\ell\|_{\mathcal{L}^2(B_{2r})}\le Cr^{2k+2+\frac{d}{2}}+o(1) \text{ for each $0\le\ell\le 2k.$}
$$
Putting these into \eqref{35equation1} gives
\begin{equation}\label{35equation3}
\|\hu_r-h_{2k+1}\|_{\mathcal{L}^2(B_2)}\le Cr(1+|\log r|)+o(1).
\end{equation}This is the desired estimate.
                          \end{proof} 

As an immediate consequence of Lemma \ref{InitialImprovement}, we have 
\begin{equation}\label{AlmostDoneOdd}\|(u_n)_r-(\pbar_n+\eps_n h_{2k+1})\|_{\mathcal{L}^2(B_2)}\le \eps_n[Cr(1+|\log r|)+o(1)].
\end{equation}

\begin{lem}\label{ReplacementOdd}
As $n\to\infty,$ we have
$$\|\overline{p_n+\eps_n h_{2k+1}}-\pbar_n-\eps_n h_{2k+1}\|_{\mathcal{L}^2(B_2)}=\eps_n o(1).$$
\end{lem}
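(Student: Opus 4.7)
Define $q_n := p_n + \eps_n h_{2k+1}$ and $\phi_n := \overline{q_n} - \pbar_n - \eps_n h_{2k+1}$; the claim reduces to showing $\|\phi_n\|_{L^2(L_\eta)} = o(\eps_n)$. Since $h_{2k+1}$ vanishes on $(\Sph)'$ and each replacement agrees with its polynomial outside $L_\eta$, $\phi_n$ belongs to $H^1_0(L_\eta)$ (viewed as a Sobolev space on $\Sph\cap L_\eta$ with zero trace on $S_\eta^\pm\cap\Sph$). The plan is first to derive a crude $O(\eps_n)$ bound on $\phi_n$ in $H^1$ from the variational characterization of the two replacements, and then to upgrade it to $o(\eps_n)$ via a blow-up/compactness argument.

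For the crude bound, observe the symmetric admissibility: $\pbar_n + \eps_n h_{2k+1}$ is admissible in the minimization defining $\overline{q_n}$ (its boundary data matches $q_n$, and on $(\Sph)'$ it equals $\pbar_n\ge 0$), and dually $\overline{q_n} - \eps_n h_{2k+1}$ is admissible for the one defining $\pbar_n$. Writing $E(w) := \int_\Sph |\nabla_\Sph w|^2 - \lambda(2k+1) w^2$ with associated bilinear form $\langle\cdot,\cdot\rangle_E$, expanding the two minimality inequalities $E(\overline{q_n})\le E(\pbar_n+\eps_n h_{2k+1})$ and $E(\pbar_n)\le E(\overline{q_n}-\eps_n h_{2k+1})$ and using the complementarity-based nonnegativity $\langle \pbar_n,\phi_n\rangle_E = -\int_{\Lambda(\pbar_n)}\overline{q_n}\,g_{p_n} \ge 0$ (since $g_{p_n}\le 0$ and $\overline{q_n}\ge 0$) yields
$$E(\phi_n) \le -2\eps_n \langle h_{2k+1}, \phi_n\rangle_E.$$
Integration by parts using $h_{2k+1}\in\PO$ reduces $\langle h_{2k+1}, \phi_n\rangle_E$ to a trace integral of $\phi_n=\overline{q_n}-\pbar_n$ on $(\Sph)'$ against $2\partial_{x_d} h_{2k+1}|_{x_d=0^+}$, which is bounded in absolute value by $C\|\phi_n\|_{H^1(L_\eta)}$ via the trace inequality. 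The choice of $\eta$ making the first Dirichlet eigenvalue of $\Delta_\Sph + \lambda(2k+1)$ on $L_\eta$ strictly negative produces the Poincar\'e-type coercivity $E(\phi)\ge c\|\phi\|_{H^1(L_\eta)}^2$ on $H^1_0(L_\eta)$. Combining gives $\|\phi_n\|_{H^1(L_\eta)}\le C\eps_n$.

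For the upgrade to $o(\eps_n)$, normalize $\tilde\phi_n := \phi_n/\eps_n$, uniformly bounded in $H^1_0(L_\eta)$, and extract a subsequence with $\tilde\phi_n\rightharpoonup \tilde\phi$ in $H^1$ and strongly in $L^2$. Since $\overline{q_n}$, $\pbar_n$ and $h_{2k+1}$ are all $(\Delta_\Sph+\lambda(2k+1))$-harmonic on $L_\eta\setminus(\Sph)'$, the limit $\tilde\phi$ is harmonic in $L_\eta^\pm$ for the same operator and vanishes on $\partial L_\eta$. Dividing the first-variation inequalities $\langle \pbar_n, \phi_n\rangle_E\ge 0$ and $\langle \overline{q_n}, \phi_n\rangle_E \le 0$ by $\eps_n$ and passing to the limit, using the strong $H^1$-convergence $\pbar_n,\overline{q_n}\to p_\infty$ (which follows from $p_n\to p_\infty\in \PO^+$ together with Lemma~\ref{H1Forv} and $\kappa_n/\eps_n\to 0$ from Lemma~\ref{InitialImprovement}), one obtains the orthogonality
$$\langle p_\infty,\tilde\phi\rangle_E = -2\int_{(\Sph)'}\tilde\phi\cdot \partial_{x_d}p_\infty|_{x_d=0^+}\,d\mathcal{H}^{d-2} = 0.$$
Combining this orthogonality with the one-sided sign information $\tilde\phi_n = \overline{q_n}/\eps_n\ge 0$ on $\Lambda(\pbar_n)$ and $\tilde\phi_n = -\pbar_n/\eps_n\le 0$ on $\Lambda(\overline{q_n})$ (both of which fill out $(\Sph)'\setminus\mathcal{N}$ in the limit, on account of $p_\infty\equiv 0$ on $(\Sph)'$ and $\partial_{x_d}p_\infty < 0$ off $\mathcal{N}$) forces $\tilde\phi\equiv 0$ on $(\Sph)'$. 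Together with harmonicity in $L_\eta^\pm$ and zero boundary data on $\partial L_\eta$, this yields $\tilde\phi\equiv 0$ on $L_\eta$, contradicting $\|\tilde\phi\|_{L^2}>0$, and the claim follows.

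I expect the main obstacle to be the final identification $\tilde\phi\equiv 0$ on $(\Sph)'$. The signed measures $g_{p_n},g_{q_n}$ carrying the first-variation information are supported on the moving contact sets $\Lambda(\pbar_n)$ and $\Lambda(\overline{q_n})$, which need not converge in a strong sense even though they collapse to $(\Sph)'$ in the limit. A careful localization around the limit contact set $\Lambda(p_\infty)=(\Sph)'$, together with the rate-controlled decay of $\pbar_n$ and $\overline{q_n}$ along $(\Sph)'$ supplied by the replacement regularity and $\kappa_n/\eps_n\to 0$, is what is needed to rigorously propagate the sign information into the limit.
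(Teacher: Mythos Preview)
Your approach is correct but considerably more elaborate than the paper's, and the two routes ultimately rest on the same key observation.

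The paper argues directly. With $w_n:=\phi_n/\eps_n$, a comparison/maximum principle for the thin obstacle problem in $L_\eta^+$ gives $|\overline{q_n}-\pbar_n|\le C\eps_n$ (the two obstacle solutions have boundary data on $S_\eta^\pm$ differing by $\eps_n h_{2k+1}$), hence $|w_n|\le C$ uniformly. Then the argument of Lemma~\ref{EventualZero} (i.e., Lemma~\ref{PinDown}) applied to each replacement shows $\pbar_n=\overline{q_n}=0$ on any compact $K\subset(\Sph)'\setminus\mathcal{N}$ once $n$ is large, so $w_n\equiv 0$ there. Since $w_n$ is uniformly bounded, $(\SphLap+\lambda)$-harmonic in $L_\eta^+$, and vanishes on $S_\eta^+$, this forces $w_n\to 0$ a.e.\ and dominated convergence gives $\|w_n\|_{\mathcal{L}^2}\to 0$.

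Your variational derivation of the crude $H^1$ bound and of the orthogonality $\langle p_\infty,\tilde\phi\rangle_E=0$ are both valid, but notice that the orthogonality is never actually used. The moment you invoke that both contact sets $\Lambda(\pbar_n)$ and $\Lambda(\overline{q_n})$ eventually contain any compact $K\subset(\Sph)'\setminus\mathcal{N}$, you already have $\tilde\phi_n\equiv 0$ on $K$ (not merely a one-sided sign), which gives $\tilde\phi=0$ on $(\Sph)'$ directly without any integral identity. That ``filling-out'' statement is precisely the paper's key step, and justifying it requires the same input---Lemma~\ref{PinDown} applied to the replacements---that you flag as the main obstacle in your closing paragraph. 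So the detour through first-variation inequalities and weak limits buys nothing extra; the $L^\infty$ bound via comparison is both quicker and sufficient. (A minor point: your final ``contradicting $\|\tilde\phi\|_{L^2}>0$'' is a phrasing slip---no contradiction hypothesis was set up; you simply conclude $\tilde\phi=0$, hence $\tilde\phi_n\to 0$ in $L^2$.)
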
 

\begin{proof}In this proof, define $w=\frac{1}{\eps}(\overline{p+\eps h_{2k+1}}-\pbar-\eps h_{2k+1})$.

Firstly, note that by the maximum principle for $\SphLap+\lambda(2k+1)$ in $L^{+}_\eta$, we have 
$|\overline{p+\eps h_{2k+1}}-\pbar|\le C\eps \text{ in $L^{+}$}.$ Thus $|w|\le C \text{ in $L^{+}$.}$

Meanwhile, in any compact subset of $B_1'\backslash\mathcal{N}$,  the same argument as in Lemma \ref{EventualZero} implies $w=0$ for large $n$. We also have $w=0$ along $S^+_\eta$. 

To summarize,  $w$ is a bounded solution to $(\SphLap+\lambda)w=0$ in $L^+_\eta$ with $w=0$ on $S^+_\eta$ and eventually vanishing in any compact subset of $B_1'\backslash\mathcal{N}$, where $\mathcal{N}$ is of dimension at most $(d-2).$

Note that $w$ is even in the $x_d$-direction and vanishes outside $L_\eta$, this implies that $\|w\|_{\mathcal{L}^2(\Sph)}=o(1)$, which gives the desired estimate. \end{proof} 

If we define $p_n'\in\PO$ as  $$p'_n=p_n+\eps_nh_{2k+1},$$ then 
$|p'_n-p_n|\le C\eps_n.$ 

With Lemma \ref{InitialImprovement} and Lemma \ref{ReplacementOdd}, we have 
$$\kappa_{p_n'}\le\kappa_{p_n}+\eps_no(1)\le \eps_n(Cr+o(1)).
$$By choosing $r$ small, we have $\kappa_{p_n'}<\iota\eps_n$ for all large $n$, where $\iota<\frac12$ is a small universal constant to be chosen.

Combining \eqref{AlmostDoneOdd} and Lemma \ref{ReplacementOdd}, we have 
\begin{equation*}\|(u_n)_r-\overline{p'_n}\|_{\mathcal{L}^2(B_2)}\le \eps_n[Cr(1+|\log r|)+o(1)].
\end{equation*}

By choosing $r$ small, depending on universal constants, such that $Cr(1+|\log r|)<\iota/2,$ we have 
$$\|(u_n)_r-\overline{p'_n}\|_{\mathcal{L}^2(B_2)}< \iota\eps_n
$$
for all large $n$. Together with Lemma \ref{PointwiseApprox}, this implies 
$$\|(u_n)_r-\overline{p'_n}\|_{H^1(B_1)}< C\iota\eps_n<\frac{1}{2}\eps_n$$ if $\iota$ is small. 

Consequently, we have $$u_n\in\mathcal{S}_{2k+1}(p_n',\frac12\eps_n,r),$$ contradicting \eqref{FailingImprovement}.

This concludes the proof of Lemma \ref{DichotomyOdd}.

\section{The dichotomy at a point with even frequency}
In this section, we establish a dichotomy similar to Lemma \ref{DichotomyOdd} but at a contact point with even frequency. We also explain how to get  a $\log$-epiperimetric inequality with a slightly improved exponent then the one in  Colombo-Spolaor-Velichkov \cite{CSV}.

The ideas are similar to those in the previous section. We only sketch the proof. 

This main lemma for this section is:

\begin{lem}[Dichotomy at a point with even frequency]\label{DichotomyEven}
Given $k\in\N$, there are universal constants, $\tilde{\eps}$, $r_0$, $c$ small and $C$ big, such that 

If $u\in\mathcal{S}_{2k}(p,\eps,1)$ with $\eps<\tilde{\eps}$ and $1\le \|p\|_{\mathcal{L}^2(\Sph)}\le 2$, then we have the following dichotomy: 

a) Either $$W_{2k}(u;1)-W_{2k}(u;r_0)\ge c\eps^2$$ and 
$$u\in\mathcal{S}_{2k}(p,C\eps,r_0);$$

b) or $$u\in\mathcal{S}_{2k}(p',\frac12\eps,r_0)$$ for some $p'$ with $$\|p'-p\|_{\mathcal{L}^2(\Sph)}\le C\eps.$$
\end{lem}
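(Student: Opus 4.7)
The plan is to mirror the contradiction argument of Section 3, indicating the few places where the even-frequency case requires different input. Suppose, toward a contradiction, that neither (a) nor (b) holds for any choice of constants. Extract a sequence $(u_n, p_n)$ with $u_n \in \mathcal{S}_{2k}(p_n, \eps_n, 1)$, $\eps_n \to 0$, $1 \le \|p_n\|_{\mathcal{L}^2(\Sph)} \le 2$, such that $W_{2k}(u_n;1) - W_{2k}(u_n;1/2) < \eps_n^2/n^2$ and $u_n \notin \mathcal{S}_{2k}(p', \eps_n/2, r_0)$ for any $p'$ with $\|p'-p_n\|_{\mathcal{L}^2(\Sph)}\le C\eps_n$. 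Since $\PE$ is finite-dimensional, along a subsequence $p_n \to p_\infty$ in $C^\infty_{\mathrm{loc}}$; by Lemma \ref{H1Forv} we have $\|v_{p_n}\|_{H^1(B_1)} \to 0$, so $\pbar_n \to p_\infty$ as well, and since each $\pbar_n \ge 0$ on $\HPP$ we get $p_\infty \in \PE^+$.

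The first new ingredient replaces Lemma \ref{EventualZero}. Set $\mathcal{N} := \{p_\infty = 0\}'$. A nonzero polynomial in $\PE$ cannot vanish identically on $\HPP$: if $p|_{\HPP}\equiv 0$ then $p = x_d g$ with $g$ harmonic and, by the even symmetry of $p$, odd in $x_d$, so $g|_{\HPP}\equiv 0$, and the procedure iterates to give arbitrarily high powers of $x_d$. Thus $\mathcal{N}$ is a proper real-algebraic subset of $\HPP$ of dimension at most $d-2$. On any compact $K \subset B_1'\setminus\mathcal{N}$ we have $p_\infty \ge c > 0$, hence $\pbar_n > c/2$ and, by Lemma \ref{PointwiseApprox}, $u_n > c/4$ in a neighborhood of $K$ for large $n$, so both $u_n$ and $\pbar_n$ are smooth and harmonic across $\HPP$ there.

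Consequently the normalized functions $\hu_n := (u_n - \pbar_n)/\eps_n$, corrected by the Fredholm term $\frac{\kappa_n}{\eps_n}\Phi_n$, are harmonic in $B_1^\pm$ and also in a neighborhood of any such $K$. Extracting a subsequence, $\hu_n + \frac{\kappa_n}{\eps_n}\Phi_n \to h$ in $\mathcal{L}^2_{\mathrm{loc}}(B_1)$, and since $\mathcal{N}$ has zero capacity, $h$ is harmonic on all of $B_1$---this is the principal structural difference from Section 3, where the corresponding limit was only harmonic on $B_1^\pm$ with vanishing trace on $B_1'$. Expanding $h = \sum_{\ell\ge 0}h_\ell$ into even-in-$x_d$ homogeneous harmonic polynomials, the calculation of Lemma \ref{InitialImprovement} transfers unchanged, now using $\Phi_p - (\Phi_p)_{1/2} = \frac{\log 2}{d+4k-2}\,\varphi_p$ and the $\mathcal{L}^2(\Sph)$-orthogonality between the $2k$-homogeneous $\varphi_p$ and every $h_\ell$ with $\ell\ne 2k$. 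This yields, for every small $r$,
\[\frac{\kappa_n}{\eps_n} \le Cr + o(1), \qquad \|(\hu_n)_r - h_{2k}\|_{\mathcal{L}^2(B_2)} \le Cr(1+|\log r|)+o(1).\]

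To close the contradiction, set $p'_n := p_n + \eps_n h_{2k} \in \PE$, which satisfies $\|p'_n - p_n\|_{\mathcal{L}^2(\Sph)}\le C\eps_n$. The even analog of Lemma \ref{ReplacementOdd}, namely $\|\overline{p'_n} - \pbar_n - \eps_n h_{2k}\|_{\mathcal{L}^2(B_2)} = \eps_n\, o(1)$, is proved exactly as there: the rescaled difference $w_n := \eps_n^{-1}(\overline{p'_n} - \pbar_n - \eps_n h_{2k})$ is a bounded solution of the Signorini problem for $\SphLap+\lambda(2k)$ in $L_\eta^+$ with zero Dirichlet data on $S_\eta^+$, and it eventually vanishes on compact subsets of $B_1'\setminus\mathcal{N}$ (where both replacements are inactive), forcing $w_n \to 0$ in $\mathcal{L}^2$. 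Combining this with the previous display and upgrading via Lemma \ref{PointwiseApprox} gives $u_n \in \mathcal{S}_{2k}(p'_n, \frac12\eps_n, r_0)$ once $r_0$ is chosen small, contradicting the standing hypothesis. The main technical obstacle is the logarithmic term in $\Phi_p$: it is the only source of the $Cr$-decay of $\kappa_n/\eps_n$, and verifying orthogonality of its coefficient $\varphi_p$ against the appropriate lower-order components of the Taylor expansion of $h$ is the key input that the Fredholm construction of Section 2 was set up to provide.
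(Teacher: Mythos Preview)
Your argument is correct and follows essentially the same path as the paper's own (sketched) proof of Lemma~\ref{DichotomyEven}, which likewise reduces to the Section~3 machinery after observing that in the even case the limit $h$ is harmonic on all of $B_1$ because $\mathcal{N}=\{p_\infty=0\}'$ has codimension at least one in $\HPP$. One small slip: when you factor $p=x_d g$, the quotient $g$ need not itself be harmonic (only $2\partial_{x_d}g+x_d\Delta g=0$), but your degree-reduction argument goes through regardless since $g$ is still odd in $x_d$ and hence vanishes on $\HPP$.
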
 

We prove this lemma by contradiction. 

Suppose the lemma is not true, then we find a sequence $(u_n,p_n)$ satisfying 
$$ 1\le\|p_n\|_{\mathcal{L}^2(\Sph)}\le 2, \text{ and } u_n\in\mathcal{S}_{2k}(p_n,\eps_n,1) \text{ with $\eps_n\to 0.$}$$
However, 
\begin{equation}\label{FailingWeissEven}
W_{2k}(u_n;1)-W_{2k}(u_n;\frac{1}{2})<\frac{1}{n^2}\eps_n^2,
\end{equation} 
and 
\begin{equation}\label{FailingImprovementEven}
u_n\notin\mathcal{S}_{2k}(p',\frac12\eps_n,r_0) \quad  \forall p'\in\PE \text{ with }\|p'-p_n\|_{\mathcal{L}^2(\Sph)}\le C\eps_n.
\end{equation} The constants $r_0$ and $C$ will be chosen depending on universal constants. 

Similar to the previous case, up to a subsequence, we have 
$$p_n\to p_\infty\in\PE \text{ in $C^\infty(B_1)$.}$$
With Lemma \ref{BoundsForKappa}, we have $p_n\ge-v_n\ge-C\eps_n^{\frac{2}{d-1}}$ on $B_1'$. Thus 
$$p_\infty\ge0 \text{ on $B_1'$.}$$

The set where $p_\infty$ vanishes on $B_1'$,  $\mathcal{N}=\{p_\infty=0\}'$, has dimension at most $(d-2)$. 

Define normalized solutions $\hu_n$ as in \eqref{NormalizedSolution}, we have, up to a subsequence,
\begin{equation*}
\|\hu_n+\frac{\kappa_n}{\eps_n}\Phi_n-h\|_{\mathcal{L}^2(B_{7/8})}=o(1) \text{ as $n\to\infty$}
\end{equation*} for some $h$ satisfying $$\Delta h=0 \text{ in $B_1$.}$$

With similar ideas as in Lemma \ref{InitialImprovement}, we can rule out lower order terms in the Taylor polynomial of $h$ at $0$ and obtain for $r\in(0,1/4)$,
\begin{equation*}\|(u_n)_r-(\pbar_n+\eps_n h_{2k})\|_{\mathcal{L}^2(B_2)}\le \eps_n[Cr(1+|\log r|)+o(1)].
\end{equation*}

If we choose $r$ small, then $p'_n=p_n+\eps_nh_{2k}\in\PE$ satisfies $|p'_n-p_n|\le C\eps_n,$ and
$$\|(u_n)_r-\overline{p'_n}\|_{\mathcal{L}^2(B_2)}< \iota\eps_n
\text { and }\kappa_{p_n'}\le\iota\eps_n
$$for large $n$, where $\iota$ is a universally small constant. 

An application of Lemma \ref{PointwiseApprox} again gives $$u_n\in\mathcal{S}_{2k}(p_n',\frac{1}{2}\eps_n,r)$$ if $\iota$ is small, which contradicts \eqref{FailingImprovementEven}. 

This completes the proof for Lemma \ref{DichotomyEven}.

\begin{rem}\label{ImprovedEpi}
We sketch how similar ideas lead to a $\log$-epiperimetric inequality for the $2k$-Weiss energy functional. We get an improved exponent than the one currently known in the literature. 

To be precise, let $w$ be a $2k$-homogeneous function satisfying $$w\ge 0 \text{ on $\HPP$,}$$ and $$\|w\|_{\mathcal{L}^2(\Sph)}\le 1, \quad |W_{2k}(w;1)|\le 1,$$ then we will show
\begin{equation}\label{EpiIneq}
W_{2k}(w;1)-W_{2k}(u;1)\ge cW_{2k}(w;1)^{1+\frac{d-3}{d+1}},
\end{equation} where $u$ is the solution to \eqref{TOP} with $u|_{\Sph}=w$, and $c$ is a universal constant. 

A similar result is known in \cite{CSV} with the exponent on the right-hand side as $1+\frac{d-2}{d}.$

It suffices to prove \eqref{EpiIneq} under the assumption $\|w\|_{\mathcal{L}^2(\Sph)}=1.$ For such $w$, choose \textit{$p\in\PE$ that minimizes $\delta(w,p)$ from \eqref{epsup}}. 

With $W(p;1)=0$, $\Delta p=0$ and the homogeneity of $p$, we have 
$$W(w;1)\le\int_{B_1}|\nabla w-\nabla p|^2\le C\delta(w,p)^2+C\|\pbar-p\|^2_{H^1(B_1)}.$$With homogeneity and harmonicity of $p$, we also have 
$$W(\pbar;1)=W(\pbar-p;1)=C[\int_{L_{\eta}}|\nabla_{\Sph}(\pbar-p)|^2-\lambda(2k)(\pbar-p)^2],$$
where the definitions of $L_\eta$  and $\lambda(2k)$ are given at the beginning of Subsection \ref{TBLP}. By making $\eta$ smaller, if necessary, we have $\int_{L_{\eta}}|\nabla_{\Sph}(\pbar-p)|^2-\lambda(2k)(\pbar-p)^2\sim\|\pbar-p\|^2_{H^1(B_1)}$. Therefore,  
$$W(w;1)\le C\delta(w,p)^{1+\frac{2}{d-1}},$$ where we used \eqref{EvenPbarEnergy2}.

With $u=w$ on $\Sph$ and $w\ge 0 \text{ on $\HPP$,}$ we have 
$$ W(w;1)-W(u;1)\ge \int_{B_1}|\nabla(w-u)|^2.$$

Therefore, it suffices to show that for $\delta(w,p)$ small, we have 
\begin{equation}\label{RemLower}\int_{B_1}|\nabla(w-u)|^2\ge c\delta(w,p)^2
\end{equation} for some universal constant $c$. 

Suppose, on the contrary, this fails. Then we find a sequence $(w_n,u_n,p_n)$ as described above with $\delta_n=\delta(w_n,p_n)\to 0$ but 
\begin{equation}\label{RemLower2}\int_{B_1}|\nabla(w_n-u_n)|^2\le \frac{1}{n^2}\delta_n^2.\end{equation}
Similar ideas as in the proof for Lemma \ref{DichotomyEven} gives, for large $n$,  $$\delta(u_n,p_n')<\frac{1}{2}\delta_n$$ for some $p_n'\in\PE$, where \eqref{RemLower2} can be used in place of \eqref{FailingWeissEven} to control terms with lower homogeneities. With \eqref{RemLower2}, this gives $\delta(w_n,p_n')<\delta_n(w_n,p_n),$ contradicting the minimizing property of $p_n.$
\end{rem}

\section{Convergence rate to the blow-up profile}
In this final section, we prove our main result Theorem \ref{MainResult}. Our result on stratification of contact points with integer frequencies, Theorem \ref{MainStratification}, follows with Whitney's extension theorem and the implicit function theorem. See, for instance, \cite{GP}.

We first give a technical lemma about sequences. We will apply this to the sequences of Weiss energy and errors in approximations at different scales.
\begin{lem}\label{Sequences}
Let $(w_n)$ and $(e_n)$ be two sequences of nonnegative real numbers with $e_0 \le 1$. Suppose that for some constants, $A$ big, $a$ small and $\gamma\in(0,1]$, we have $$w_{n+1}\le A \, e_n^{1+\gamma},$$ and  the following dichotomy:
\begin{itemize}
\item{ either $w_{n+1}\le w_n-ae_n^2$ and $e_{n+1}=Ae_n$; }
\item{ or $w_{n+1}\le w_n$ and $e_{n+1}=\frac{1}{2}e_{n}$}.
\end{itemize}
Then  
$$\sum e_n<\sigma(e_0) \to 0 \quad \mbox{as $e_0 \to 0$},$$
and \begin{equation}\label{OddSum}\sum_{n\ge N}e_n\le C(1-c)^N \text{ if $\gamma=1$;}\end{equation} and 
\begin{equation}\label{EvenSum}\sum_{n\ge N}e_n \le CN^{\frac{-\gamma}{1-\gamma}}\text{ if $\gamma\in(0,1)$.}\end{equation}
Here $c\in(0,1)$ and $C$ are constants depending only on $A$, $a$ and $\gamma.$
\end{lem}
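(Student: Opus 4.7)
The plan is to replace the quadratic loss $w_{n+1}\le w_n-ae_n^2$ of case (a) with a \emph{linear} loss in $e_n$ by means of a concave reparametrisation. I set
$$\Psi_n:=w_n^\alpha,\qquad \alpha:=\frac{\gamma}{1+\gamma}\in(0,\tfrac12].$$
Concavity of $t\mapsto t^\alpha$ gives $\Psi_n-\Psi_{n+1}\ge \alpha w_n^{\alpha-1}(w_n-w_{n+1})\ge \alpha a\,w_n^{\alpha-1}e_n^2$ in case (a), while case (b) yields $\Psi_{n+1}\le\Psi_n$ trivially. To convert $e_n^2$ into $e_n$, I combine the hypothesis $w_{n+1}\le A e_n^{1+\gamma}$ with the observation $e_{n-1}\le 2e_n$ (valid in both branches of the dichotomy, since $e_n=e_{n-1}/2$ or $e_n=Ae_{n-1}$) to obtain $w_n\le C e_n^{1+\gamma}$ for every $n\ge1$. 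Thanks to the algebraic identity $(\alpha-1)(1+\gamma)=-1$, this yields $w_n^{\alpha-1}\ge C'e_n^{-1}$ and hence the linear dissipation
$$\Psi_n-\Psi_{n+1}\ge c\,e_n\qquad\text{in case (a)}.$$

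Telescoping from $N\ge 1$ gives $\sum_{n\in I_a,\,n\ge N}e_n\le \Psi_N/c\le Ce_N^\gamma$, where $I_a$ denotes the set of case (a) indices. To promote this into a bound on the full tail $S_N:=\sum_{n\ge N}e_n$, I decompose $[N,\infty)$ as an initial case (b) stretch (of sum at most $2e_N$) followed by blocks $[n_i,n_{i+1})$ indexed by the case (a) times $n_i\ge N$. Each block comprises the starting value $e_{n_i}$, the jump $e_{n_i+1}=Ae_{n_i}$, and subsequent case (b)'s, so its sum is a finite geometric series bounded by $(2A+1)e_{n_i}$. Combining,
$$S_N\le 2e_N+(2A+1)\sum_{n\in I_a,\,n\ge N}e_n\le C'e_N^\gamma,$$
where the absorption of the linear term into $e_N^\gamma$ (valid for $e_N\le 1$) is legitimate once $N$ is large since the a priori estimate $\sum_{n\in I_a}e_n^2\le w_0/a$ forces $e_n\to 0$. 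The global smallness $\sum_n e_n\le\sigma(e_0)\to 0$ follows from the same block decomposition applied at $N=0$, using $e_0\le 1$ and $e_1\le Ae_0$.

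Writing $e_N=S_N-S_{N+1}$, the estimate $S_N\le C'e_N^\gamma$ rearranges into the self-improving inequality
$$S_{N+1}\le S_N-c\,S_N^{1/\gamma}.$$
When $\gamma=1$ this is the linear contraction $S_{N+1}\le(1-c)S_N$, which gives \eqref{OddSum}. When $\gamma\in(0,1)$, introducing $T_N:=S_N^{-(1-\gamma)/\gamma}$ and expanding $(1-cS_N^{(1-\gamma)/\gamma})^{-(1-\gamma)/\gamma}$ by Bernoulli's inequality transforms the recursion into $T_{N+1}\ge T_N+c'$; hence $T_N\gtrsim N$ and $S_N\lesssim N^{-\gamma/(1-\gamma)}$, proving \eqref{EvenSum}.

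The step I expect to require the most care is choosing $\alpha$ so that the exponents in the Lyapunov computation line up to give a \emph{linear} dissipation in $e_n$; this is exactly what the identity $(\alpha-1)(1+\gamma)=-1$, i.e.\ $\alpha=\gamma/(1+\gamma)$, accomplishes. Minor bookkeeping issues---that $w_0$ itself is not bounded by the hypothesis, and that $w_n^{\alpha-1}$ is singular at $w_n=0$---are handled respectively by running the Lyapunov telescope from $N=1$ and separately absorbing $e_0$ into $\sigma(e_0)$, and by noting that $w_n=0$ together with $e_n>0$ would force $w_{n+1}<0$ in case (a), so only case (b) can occur at such steps and the Lyapunov inequality is vacuously satisfied.
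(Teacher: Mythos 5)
Your proof is correct, but it follows a genuinely different route from the paper's. The paper introduces the single Lyapunov quantity $\alpha_n=w_n+\mu e_n^2$, shows the discrete ODE $\alpha_{n+1}\le\alpha_n-c\,\alpha_n^{2/(1+\gamma)}$, deduces decay of $\alpha_n$ (geometric for $\gamma=1$, polynomial for $\gamma<1$), and then, in the case $\gamma\in(0,1)$, recovers the tail sum of $e_n$ from $e_n^2\le C(\alpha_n-\alpha_{n+1})$ via Cauchy--Schwarz over dyadic blocks $[N,2N]$. You instead transform $w_n$ by the concave power $\alpha=\gamma/(1+\gamma)$ so that, thanks to $w_n\le Ce_n^{1+\gamma}$ (from the hypothesis shifted by one index), case (a) dissipates $\Psi_n=w_n^\alpha$ \emph{linearly} in $e_n$; you then control the case (b) stretches by explicit geometric series and obtain the closed recursion $S_{N+1}\le S_N-c\,S_N^{1/\gamma}$ for the tail sums themselves, which yields \eqref{OddSum} and \eqref{EvenSum} directly without the dyadic-block Cauchy--Schwarz step. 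Both arguments are of comparable length and give the same exponents; yours has the small advantage of producing the tail estimate as the solution of a recursion satisfied by $S_N$ itself, while the paper's is more compact at the Lyapunov stage. Two minor points to clean up: the absorption $2e_N\le Ce_N^\gamma$ should not be justified by taking ``$N$ large'' (that would make the constant depend on the sequence); instead note that $a e_n^2\le w_n\le w_1\le A e_0^{1+\gamma}\le A$ at every case (a) index $n\ge1$, whence $e_n\le C(A,a)$ uniformly for all $n\ge 1$, which is all the absorption needs (and replace your appeal to $w_0/a$ by $w_1/a$, since $w_0$ is not controlled by the hypotheses). Also, at a step with $w_n=0$, case (a) may still occur provided $e_n=0$, but then the dissipation inequality is trivially true, so the conclusion is unaffected.
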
 

\begin{proof}
Define a new sequence $$\alpha_n:=w_n+\mu e_n^2.$$ If $\mu>0$ is small enough, then we have 
\begin{equation}\label{alphaDecay}\alpha_{n+1}\le \alpha_n-c \alpha_n^{\frac{2}{1+\gamma}},
\end{equation}
and $\alpha_1 \le C e_0^{1+\gamma}$.

For $\gamma=1$, we have $\alpha_n\le (1-c)^{n-1}\alpha_1,$ which gives the desired estimate for this case. 

For $\gamma\in(0,1)$, from \eqref{alphaDecay} we have
that for all $n\ge 1$,
 $$\alpha_n\le C(n+M)^{-\frac{1+\gamma}{1-\gamma}},$$
 with $M \to \infty$ as $e_0 \to 0$, and the first estimate follows.
Meanwhile, by our definition of $\alpha_n$, we have $e_n^2\le C(\alpha_n-\alpha_{n+1}).$
Thus
\begin{align*}
\sum_{n=N}^{2N}e_n&\le C\sum_{N}^{2N}(\alpha_n-\alpha_{n+1})^{1/2}\\
&\le CN^{1/2} \left [ \sum_{N}^{2N}(\alpha_n-\alpha_{n+1})\right]^{1/2}\\
&\le CN^{1/2} \alpha_{N}^{1/2}\\
&\le CN^{-\frac{\gamma}{1-\gamma}}.
\end{align*}This gives the desired estimate for $\gamma\in(0,1)$.
\end{proof} 

Now we give the proof of our main result.
\begin{proof}[Proof of Theorem \ref{MainResult}]
Suppose that $0\in\Lambda_m(u)$ for some $m\in\N$, then up to an initial rescaling, we have 
$$u\in\mathcal{S}_m(p,\eps,1), \quad \quad \|p\|_{L^2(B_1)}=3/2,$$ for some $\eps<\tilde{\eps}$. Here $\tilde{\eps}$ is the constant from Lemma \ref{Dichotomy}, and the solution class $\mathcal{S}_m$ is from Definition \ref{DefWellApprox}.

As the initial set up, let $p_0=p$, $\rho_0=1$, $e_0=\eps$, and $w_0=W_m(u;1)$. 

Suppose that we have found $p_n$, $e_n$ $w_n$ small, and $\rho_n\in(0,1)$ such that $u\in\mathcal{S}_{m}(p_n,e_n,\rho_n)$ with $e_n<\tilde{\eps}$. Then we apply Lemma \ref{Dichotomy}. If possibility a) happens in the dichotomy, we let $p_{n+1}=p_n$, $e_{n+1}=Ce_n$. If possibility b) happens, we let $p_{n+1}=p'$ and $e_{n+1}=\frac{1}{2}e_n.$ In both cases, we let $\rho_{n+1}=r_0\rho_n$ and $w_{n+1}=W_m(u;\rho_{n+1}).$

By Lemma \ref{Dichotomy} and Lemma \ref{WeissComparison2},  the sequences $(w_n)$ and $(e_n)$ satisfy the assumptions in Lemma \ref{Sequences}, with $\gamma=1$ if $m$ is odd, and $\gamma=\frac{2}{d-1}$ if $m$ is even. In particular, we have  $\sum e_n<\tilde{\eps}$ along the sequence if $e_0$ is chosen small enough. Consequently, Lemma \ref{Dichotomy} can be applied indefinitely. 

Now note that $\|p_{n+1}-p_n\|_{\mathcal{L}^2(B_1)}\le Ce_n.$ The summability of $(e_n)$ implies the convergence of $p_n$ to some limit $p_\infty$. 

If we denote by $u_r$ the rescaled solution $u_r(x)=\frac{1}{r^m}u(rx).$ When $m$ is odd, we use \eqref{OddSum} to get $\|u_{r_0^n}-p_\infty\|_{H^1(B_1)}\le C(1-c)^n.$ This gives the estimate in \eqref{MainResultOdd}. When $m$ is even, we use \eqref{EvenSum} to get $\|u_{r_0^{n}}-p_\infty\|_{H^1(B_1)}\le Cn^{-\frac{2}{d-3}}.$ This gives the estimate in \eqref{MainResultEven}.
\end{proof}

 The stratification in Theorem \ref{MainStratification} follows by the same strategy as in Garofalo-Petrosyan \cite{GP} or Colombo-Spolaor-Velichkov \cite{CSV}. In the following remark, we point out that $\Lambda_1(u)$ and $\Lambda_3^0(u)$ always lie in the interior of the contact set. 

\begin{rem}\label{Interior}
Suppose $0\in\Lambda_{2k+1}(u)$, then there is $p\in\PO^+$ such that $$u(x)=p(x)+O(|x|^{2k+1+\alpha})$$ as $x\to0.$ 

If $0\in\Lambda_1(u)$, then $p$ is a positive multiple of $-|x_d|$. 

If $0\in\Lambda_3(u)$,  we have $$p(x',x_d)=-|x_d|(p_1(x')+x_d^2p_2(x',x_d)),$$ where $p_1$ is a $2$-homogeneous polynomial with $p_1\ge0$ on $\HPP.$ The zero stratum of $\Lambda_3(u)$ is defined as those points where $p_1$ depends on all $(d-1)$-variables. This implies $p_1>0$ on $(\Sph)'.$

Consequently, if $0\in\Lambda_1(u)$ or $\Lambda_3^0(u)$, then $\ddd p<0$ on $(\Sph)'$. With Lemma \ref{PinDown}, we have $u=0$ in $B_r'$ for some $r>0.$
\end{rem}


\end{document}